\providecommand{\noopsort}[1]{}
\numberwithin{equation}{subsection}
\theoremstyle{definition} 
 \newtheorem{definition}{Definition}[section]
 \newtheorem{remark}[definition]{Remark}
 \newtheorem{example}[definition]{Example}
\newtheorem*{notation}{Notations}
\theoremstyle{plain}      
 \newtheorem{proposition}[definition]{Proposition}
 \newtheorem{theorem}[definition]{Theorem}
 \newtheorem{corollary}[definition]{Corollary}
 \newtheorem{lemma}[definition]{Lemma}
\newcommand*{\house}[1]{
  \mathord{
    \mathpalette\@house{#1}
  }
}
\newcommand*{\@house}[2]{
  \dimen@=\fontdimen8 %
      \ifx#1\scriptscriptstyle\scriptscriptfont
      \else\ifx#1\scriptstyle\scriptfont
      \else\textfont\fi\fi
      3 %
  \sbox0{%
    $#1%
      \vrule width\dimen@\relax
      \overline{%
        \kern2\dimen@
        \begingroup 
          #2%
        \endgroup
        \kern2\dimen@
      }
      \vrule width\dimen@\relax
      \mathsurround=1.5\dimen@ 
    $
  }
  \ht0=\dimexpr\ht0-\dimen@\relax
  \dp0=\dimexpr\dp0+2\dimen@\relax
  \vbox{
    \kern\dimen@ 
    \copy0 
  }
}
\def\dyg{{\rm dyg}}
\def\11{{\mathbf 1}}
\theoremstyle{remark}
\newtheorem{exampl}[subsubsection]{Example}
\def\bee{\begin{exampl}}
\def\eee{\end{exampl}}
\def\bn{\begin{notation}}
\def\en{\end{notation}}
\def\br{\begin{remark}}
\def\er{\end{remark}}
\def\bp{\begin{prop}}
\def\ep{\end{prop}}
\def\bpr{\begin{proof}}
\def\epr{\end{proof}}
\def\bt{\begin{thm}}
\def\et{\end{thm}}
\def\be{\begin{equation}}
\def\ee{\end{equation}}
\def\bl{\begin{lem}}
\def\el{\end{lem}}
\def\bc{\begin{cor}}
\def\ec{\end{cor}}
\def\bd{\begin{defn}}
\def\ed{\end{defn}}
\numberwithin{equation}{subsection}
\author{Mabrouk Ben Nasr}
\thanks{}
\address{Department of Mathematics, Faculty of sciences, Sfax, Tunisia}
\email{mabrouk$\_$bennasr@yahoo.fr}  
\author{Hassen Kthiri}
\thanks{}
\address{Department of Mathematics, Faculty of sciences, Sfax, Tunisia}
\email{hassenkthiri@gmail.com}
\author{Jean-Louis Verger-Gaugry}
\thanks{}
\address{
LAMA, CNRS UMR 5127,
Univ. Grenoble Alpes, Univ. Savoie Mont~Blanc,
F - \!73000 Chamb\'ery, \!France}
\email{Jean-Louis.Verger-Gaugry@univ-smb.fr}
\begin{document}

\dedicatory{\today}


\def\dyg{{\rm dyg}}


\title{On algebraic integers which are 
2-Salem 
elements in positive characteristic}

\markboth{Mabrouk Ben Nasr, 
Hassen Kthiri {\rm and} 
Jean-Louis Verger-Gaugry}{2-Salem 
elements in positive characteristic}

\begin{abstract}
Bateman and Duquette have initiated the study of Salem elements in positive characteristic. This work
extends their results to 2-Salem elements 
whose minimal polynomials are of the type 
$Y^n+\lambda_{n-1}Y^{n-1}+\ldots+\lambda_1Y+\lambda_0 
\in \mathbb{F}_q[X][Y ]$ where 
$n \geq2, \lambda_0\neq 0$ and
$\deg \lambda_{n-1} < \deg \lambda_{n-2} 
= \displaystyle\max_{i\neq n-2}\deg(\lambda_i)$.
This work provides an analogue of their results for 
2-Salem elements whose minimal polynomials
meet certain requirements.
\end{abstract}

\maketitle

\vspace{0.7cm}

Keywords: Finite field, Laurent series, 2-Salem series,
2-Salem element, irreducible polynomial, Newton polygon, Salem element. 

\vspace{0.5cm}

2020 Mathematics Subject Classification:
11R04; 11R06,11R09, 11R52, 12D10.

\tableofcontents  

\newpage
\section{Introduction}
\label{S1}

A \textit{Salem number} is a real algebraic integer 
$\theta > 1$ of even degree at least 4, having 
$\theta^{-1}$ as a conjugate over $\mathbb{Q}$, 
having all its
conjugates $\theta_i$ excluding $\theta$ 
and  $\theta^{-1}$, of modulus exactly $1$ \cite{138}. 
The monic minimal polynomial, over 
$\mathbb{Q}$, $\Lambda(z)$ of  a 
Salem number  $\theta$ is
reciprocal: it satisfies the equation $z^{\deg \Lambda(z)}\Lambda(\frac{1}{z}) = \Lambda(z)$. To put is simply, this means that its coefficients form
a palindromic sequence: they read the same backwards as forwards. Therefore $\theta+\theta^{-1}$ is a real algebraic
integer $\theta> 2$ such that its conjugates $\neq\theta+\theta^{-1}$ lie in the real interval $[-2,2]$.
The Mahler measure
$M(\theta) :=\displaystyle\prod_{i=1}^{\deg \theta}\max\{1,|\theta_i|\}$
of $\theta$ satisfies $M(\theta) = \theta$. 
A Salem number is the Mahler measure of itself.

The {\it set of Salem numbers} is 
traditionally denoted by $T$ \cite{2}. 
The smallest known element  of $T$ is Lehmer's
number $\beta_0 = 1.1762\ldots$ of degree $10$ , 
as {\it dominant} root (``i.e. if $\beta$ is another root, 
then $|\beta|<\beta_0$") of Lehmer's polynomial:
\begin{equation}\label{0900}
P(X) = X^{10}+X^9-X^7-X^6-X^5-X^4-X^3+X+1.
\end{equation}
The
Surveys \cite{138} \cite{139} take stock of 
various problems on Salem numbers 
and more generally Mahler measures in all their forms.

Kerada \cite{6} defined and studied, 
as a generalization of a \textit{Salem number}, 
j-\textit{Salems}, 
$j \geq2$
(also called {\it $j$-Salem numbers} in the literature,
e.g. in \cite{umemoto}). 
In particular, a {\it 2-Salem} 
is a pair $(\beta_1, \beta_2)$ 
of conjugate algebraic integers of
modulus $> 1$ whose remaining conjugates have modulus 
at most $1$, with at least one having modulus exactly
$1$. 
The {\it set of 2-Salems}  is denoted by $T_2$. 
It is partitioned as $T_2 = T'_2\cup T''_2$
where $T'_2$ is the set of 2-Salems 
with $\beta_1,\beta_2\in \mathbb{R}$ 
and $T''_2$
the set of 2-Salems for 
which $\beta_1$ and $\beta_2$ are 
complex  non-real (and so
complex conjugates of one another, 
$\beta_1 = \bar{\beta_2}$).

In 1962 Bateman and Duquette \cite{1}  
introduced and characterized the Salem and Pisot (PV)
elements in the field of Laurent series.
We first start recalling their theorem prior
to stating an analogue theorem for 2-Salem elements,
extending Kerada's study.
\begin{theorem}[Bateman - Duquette] 
\label{01}
 An element $\omega$ in $\mathbb{F}_q((X^{-1}))$ 
 is a Salem (resp. Pisot) element if and only if
its minimal polynomial can be written 
$$\Lambda(Y )= Y^s+ \lambda_{s-1}Y^{s-1}+\ldots+ 
\lambda_0,\qquad
\lambda_i \in \mathbb{F}_q[X] \quad {\rm for} 
~i = 0,\ldots, s-1,$$ with
$|\lambda_{s-1}| = |\omega| > 1$ and
$|\lambda_{s-1}| =
\displaystyle\max_{0\leq i\leq s-2}|\lambda_i|$ 
$($resp. $|\lambda_{s-1}| 
>\displaystyle\max_{0\leq i\leq s-2}|\lambda_i|)$.
\end{theorem}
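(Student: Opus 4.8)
The plan is to translate everything into the language of the non-archimedean valuation $v := -\log_q|\cdot|$ attached to the place at infinity of $\mathbb{F}_q(X)$, for which $v(f) = -\deg f$ on $\mathbb{F}_q[X]$, and to read off the absolute values of the conjugates of $\omega$ from the Newton polygon of $\Lambda$. Since $\mathbb{F}_q((X^{-1}))$ is complete for $|\cdot|$, the valuation extends uniquely to a fixed algebraic closure; the conjugates of $\omega$ are the roots $\omega = \omega_1,\dots,\omega_s$ of $\Lambda$ there (counted with multiplicity, to accommodate possible inseparability), and the Newton polygon of $\Lambda$ — the lower convex hull of the points $(i, v(\lambda_i))$, $0\le i\le s$, with $\lambda_s=1$ — has the property that an edge of slope $m$ and horizontal length $\ell$ corresponds to exactly $\ell$ roots $\omega_j$ with $v(\omega_j) = -m$, i.e. $|\omega_j| = q^{m}$. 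Note $\lambda_0\neq 0$ since $\Lambda$ is irreducible and $\omega\neq 0$.

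For the direct implication (the element being Salem or Pisot forces the stated shape of the coefficients) I would argue through Vieta's formulas $\lambda_{s-k} = (-1)^k e_k(\omega_1,\dots,\omega_s)$ together with the ultrametric inequality. Normalizing $|\omega_1| = |\omega| > 1$ and $|\omega_j|\le 1$ for $j\ge 2$, the term $\omega_1$ strictly dominates in $e_1$, giving $|\lambda_{s-1}| = |\omega_1| = |\omega| > 1$; and every product of $k\ge 2$ roots has absolute value $\le |\omega_1|$, so $|\lambda_{s-k}|\le |\lambda_{s-1}|$ for all $k$. This already yields the Pisot inequality once one checks that when all $|\omega_j| < 1$ for $j\ge 2$ each such product is \emph{strictly} smaller than $|\omega_1|$. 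The extra point in the Salem case is to produce the equality $\max_{i\le s-2}|\lambda_i| = |\lambda_{s-1}|$: if $r\ge 1$ conjugates have absolute value exactly $1$, then in $e_{r+1}$ the single product of $\omega_1$ with those $r$ unit-absolute-value conjugates has absolute value exactly $|\omega_1|$ while every other product of $r+1$ roots is strictly smaller, so this maximal term is unique and $|\lambda_{s-r-1}| = |e_{r+1}| = |\omega_1| = |\lambda_{s-1}|$ with $s-r-1\le s-2$, as required.

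For the converse I would build the Newton polygon directly from the coefficient conditions. In both cases $v(\lambda_{s-1}) = -\deg\lambda_{s-1} < 0$ while $v(\lambda_s)=0$. In the Pisot case $v(\lambda_{s-1})$ is the strict minimum over $i\le s-2$, so $(s-1, v(\lambda_{s-1}))$ is the unique lowest vertex; the polygon descends with negative slopes from $(0,v(\lambda_0))$ to it and then rises along the single edge to $(s,0)$ of slope $\deg\lambda_{s-1}>0$. Reading off slopes gives exactly one root with $|\omega_j| = q^{\deg\lambda_{s-1}} = |\lambda_{s-1}| > 1$ and all others with $|\omega_j| < 1$, so $\omega$ (which is a root with $|\omega|>1$) is that dominant root and is a Pisot element. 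In the Salem case the minimal $v$-level is attained both at $x=s-1$ and at the least index $i_{\min}\le s-2$ with $\deg\lambda_{i_{\min}} = \deg\lambda_{s-1}$, so the hull now carries a horizontal edge from $(i_{\min}, v(\lambda_{s-1}))$ to $(s-1, v(\lambda_{s-1}))$ of slope $0$, a final rising edge to $(s,0)$, and descending edges on the left: this gives one root of absolute value $|\lambda_{s-1}|>1$, at least one root of absolute value exactly $1$, and the rest of absolute value $<1$ — precisely the Salem profile.

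The step I expect to be most delicate is the Salem equality and the exact bookkeeping of the flat edge: one must confirm that the horizontal segment really is an edge of the lower hull (immediate, as no point lies below the minimal level) and that its strictly positive length $\ge 1$ guarantees at least one conjugate of absolute value exactly $1$, matching the defining ``at least one'' clause; symmetrically, in the direct implication, the uniqueness of the dominant term in $e_{r+1}$ — which prevents any ultrametric cancellation — is what upgrades the inequality $\le$ into the equality $=$. A minor technical caveat to dispatch is inseparability in characteristic $p$: since $\Lambda$ may have repeated roots, every statement is read with multiplicities, which is harmless because both the Newton polygon and Vieta's formulas respect them.
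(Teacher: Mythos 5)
The paper never proves Theorem \ref{01}: it is quoted from Bateman--Duquette \cite{1} and used as a black box, so there is no internal proof to compare yours against. Your route --- reading the absolute values of the conjugates off the Newton polygon in one direction, and using symmetric functions together with the ultrametric inequality in the other --- is exactly the toolkit the paper itself deploys (via Weiss's Proposition \ref{03}) for its own analogous statements such as Corollary \ref{05} and Theorem \ref{06}, and your main steps check out: the domination of $\omega_1$ in $e_1$, the bound $|e_k|\le|\omega_1|$ for $k\ge 2$ (strict in the Pisot case), the uniqueness of the maximal term in $e_{r+1}$ that upgrades the Salem inequality to an equality, and the two polygon configurations (unique bottom vertex at $x=s-1$, respectively a horizontal bottom edge of length $\ge 1$) in the converse.

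The one step that fails is your closing caveat: the claim that possible inseparability is ``harmless because both the Newton polygon and Vieta's formulas respect multiplicities.'' It is not harmless, and reading the argument with multiplicities actually destroys it. If the minimal polynomial of $\omega$ were inseparable, then $\Lambda(Y)=g(Y^{p^{e}})$ for some $e\ge 1$, so every coefficient $\lambda_i$ with $i$ not divisible by $p^{e}$ vanishes; in particular $\lambda_{s-1}=0$. Equivalently, with multiplicities each distinct root occurs $p^{e}$ times, so $e_1=p^{e}(\omega_1+\cdots)=0$ in characteristic $p$: the dominant term you rely on is annihilated, and the conclusion $|\lambda_{s-1}|=|\omega|>1$ would then be false, not merely unproved. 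Hence the forward implication genuinely requires showing that inseparability cannot occur, i.e.\ that any $\omega\in\mathbb{F}_q((X^{-1}))$ algebraic over $\mathbb{F}_q(X)$ is separable over $\mathbb{F}_q(X)$. This is a standard fact --- for instance, $1,X^{1/p},\ldots,X^{(p-1)/p}$ stay linearly independent over $\mathbb{F}_q((X^{-1}))$ because their valuations have distinct fractional parts, so $\mathbb{F}_q((X^{-1}))$ is linearly disjoint from $\mathbb{F}_q(X^{1/p})$ over $\mathbb{F}_q(X)$ --- but it must be invoked; the paper records the analogous separability remark for 2-Salem elements in Section 2. Your converse direction needs no repair: there the hypothesis $|\lambda_{s-1}|>1$ gives $\lambda_{s-1}\neq 0$, which already rules out the form $g(Y^{p^{e}})$.
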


In this work,  instead of the classical setting of the real 
numbers, the analogues of Kerada's
2-\textit{Salems} over
the ring of formal Laurent series over finite fields are 
investigated. 
In the context of the original
study of \textit{Salem elements} in positive 
characteristic by
Bateman and Duquette \cite{1}, 
2-\textit{Salem elements} in positive 
characteristic
will also be called \textit{2-Salem series}. 
The objectives of the present note consist
in extending some of the results of 
Bateman and Duquette to 2-Salem series over $\mathbb{F}_q[X],~q\neq 2^r$, 
and to study
the analogues of the above-mentioned properties 
of 2-Salem series. 
More precisely, let  $\mathbb{F}_q$ denote 
the finite field
having q elements, $q \geq3$, and let p be the 
characteristic of  $\mathbb{F}_q$; $q$ is a power of $p$. 
Let $X$ be an indeterminate over
$\mathbb{F}_q$ and denote $k :=\mathbb{F}_q(X)$. 
Let $\infty$ be the unique place of $k$ which 
is a pole of $X$, and denote
 $k_\infty :=  \mathbb{F}_q(( \frac{1}{X}))$.
 Let $C_\infty$ be a completion of an algebraic closure 
 of $k_{\infty}$. Then $C_{\infty}$ is algebraically closed 
 and complete, and we
denote by $\upsilon_{\infty}$ the valuation on $C_{\infty}$ 
normalized by $\upsilon_{\infty}(X) =-1$. 
We fix an embedding of an algebraic closure
of $k$ in $C_{\infty}$ so that all the finite extensions of 
$k$ mentioned in this work will be contained 
in $C_{\infty}$. 
An explicit
description of $\upsilon_{\infty}$ is done in section 2. 
For simplicity's sake the algebraic closure of $k_{\infty}$ 
will be often denoted
by  $\mathbb{F}_q((X^{-1}))$.

2-Salem series over $\mathbb{F}_q[X]$ may belong to $k_{\infty}$ or to finite extensions of $k_{\infty}$. By analogy with Kerada's
notations we denote by $T^{\ast}_2$ 
the set of 2-Salem series. 
It can be partitioned as 
$T^{\ast}_2=T'^{\ast}_2\cup T''^{\ast}_2$
where $T'^{\ast}_2$ is by definition 
those 2-Salem series 
$(\omega_1,\omega_2)$ over $\mathbb{F}_q[X]$ 
which (both) belong to $\mathbb{F}_q((X^{-1}))$, 
and $T''^{\ast}_2$, by definition, those 2-Salem series, 
not in $\mathbb{F}_q((X^{-1}))$, 
such that $(\omega_1^n,\omega_2^n)\in T'^{\ast}_2$
for some integer $n\geq2$.

\begin{theorem}\label{1128} 
Suppose $q\neq 2^r$ for any integer $r\geq1$,
and $n \geq 3$. 
Let $\Lambda$ be the polynomial defined by
\begin{equation}\label{1h}
\Lambda(Y)= Y^n+\lambda_{n-1}Y^{n-1}+\lambda_{n-2}Y^{n-2}+\ldots+ \lambda_1Y+ \lambda_0\in \mathbb{F}_q[X][Y],
\end{equation}
assumed irreducible and
such that
$\lambda_0 \neq 0$.
Let us assume
$$\deg\lambda_{n-1}
<\deg\lambda_{n-2}
=\max_{i\neq n-2}\deg(\lambda_i).$$ 
Denote by $\omega_1$ and $\omega_2$ 
the dominant roots of $\Lambda$. 
Then
\begin{itemize}
  \item[(i)] for $n \geq 4$:
  if $ \deg\lambda_{n-2}>2\deg\lambda_{n-1}$, then  
  $(\omega_1,\omega_2)\in T'^{\ast}_2$
if and only if  $\deg\lambda_{n-2}$ is even, 
the dominant coefficient  $\alpha_{2s}$ of 
$\lambda_{n-2} 
= \alpha_{2s}X^{2s} +\ldots + \alpha_{0}$ 
is equal to $-a^2$ for some $a \in \mathbb{F}_{q}^{*}$,
and $\deg\lambda_{n-3}<\deg\lambda_{n-2}$,
\item[(ii)] for $n=3$:
if $ \deg\lambda_{1}>2\deg\lambda_{2}$, then  
  $(\omega_1,\omega_2)\in T'^{\ast}_2$
if and only if  $\deg\lambda_{1}$ is even, 
the dominant coefficient $\alpha_{2s}$ of 
$\lambda_{1} 
= \alpha_{2s}X^{2s} +\ldots + \alpha_{0}$ 
is equal to $-a^2$ for some $a \in \mathbb{F}_{q}^{*}$,
  \item[(iii)] for $n \geq 3$:
  if $\deg\lambda_{n-2}<2\deg\lambda_{n-1}$, then $(\omega_1,\omega_2)\in T'^{\ast}_2$.
\end{itemize}
 \end{theorem}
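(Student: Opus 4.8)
The plan is to analyze the Newton polygon of $\Lambda$ at the place $\infty$, since the valuations of the roots $\omega_i$ are read off from the slopes of this polygon. I would set $d_i := \deg \lambda_i$ (so $\upsilon_\infty(\lambda_i) = -d_i$) and plot the points $(i, -d_i)$ for $0 \le i \le n$, with the leading point $(n,0)$ coming from the monic term. The hypothesis $d_{n-1} < d_{n-2} = \max_{i \neq n-2} d_i$ forces the vertex at $i = n-2$ to sit strictly below the segment joining $(n,0)$ to the neighboring lattice points, so the lower convex hull necessarily has a steep edge on its right end emanating from $(n,0)$. A Salem-type conclusion requires exactly two roots of modulus $>1$, i.e. two roots with negative valuation; so I would show that the rightmost edge of the Newton polygon has horizontal length exactly $2$, joining $(n,0)$ to $(n-2,-d_{n-2})$, which accounts for the pair $(\omega_1,\omega_2)$, while all remaining edges have nonnegative slope, forcing $\upsilon_\infty(\omega_i) \ge 0$ for the other roots.

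The three cases are governed by how the slope $-d_{n-2}/2$ of this rightmost edge compares to the slope one would get from the point $(n-1,-d_{n-1})$, which is where the condition $d_{n-2}$ versus $2 d_{n-1}$ enters. In case (iii), where $d_{n-2} < 2 d_{n-1}$, the point $(n-1,-d_{n-1})$ lies strictly below the segment from $(n,0)$ to $(n-2,-d_{n-2})$, so the polygon instead has two edges near the top: one from $(n,0)$ to $(n-1,-d_{n-1})$ of slope $-d_{n-1}$, and one from $(n-1,-d_{n-1})$ to $(n-2,-d_{n-2})$ of slope $-(d_{n-2}-d_{n-1})$, each of horizontal length $1$, giving two roots of distinct negative valuations $-d_{n-1}$ and $-(d_{n-2}-d_{n-1})$; since these are the only negatively-sloped edges, there are exactly two dominant roots and they automatically lie in $k_\infty$ (no ramification is forced), so $(\omega_1,\omega_2) \in T'^\ast_2$ unconditionally, matching the statement. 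In cases (i) and (ii), where $d_{n-2} > 2 d_{n-1}$, the point $(n-1,-d_{n-1})$ lies strictly above the segment from $(n,0)$ to $(n-2,-d_{n-2})$, so that segment is a genuine edge of slope $-d_{n-2}/2$ and horizontal length $2$, and the pair $(\omega_1,\omega_2)$ is a conjugate pair with $\upsilon_\infty(\omega_j) = -d_{n-2}/2$.

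This is where the parity and square conditions arise: for $(\omega_1,\omega_2)$ to lie in $k_\infty = \mathbb{F}_q((X^{-1}))$ rather than in a ramified quadratic extension, the edge of denominator-$2$ slope must split. I would invoke the standard theory of the residual polynomial attached to an edge of the Newton polygon: the two roots along this edge belong to $k_\infty$ precisely when the associated quadratic residual equation is solvable over $\mathbb{F}_q$. Concretely, $\upsilon_\infty(\omega_j) = -d_{n-2}/2$ is a valuation in $\mathbb{Z}$ only if $d_{n-2}$ is even (say $d_{n-2} = 2s$), and writing the leading behavior of the two roots as $\omega_j \sim c\, X^{s}$ forces $c$ to satisfy $c^2 + \alpha_{2s} = 0$, i.e. $\alpha_{2s} = -c^2$; solvability in $\mathbb{F}_q^\ast$ is exactly the condition $\alpha_{2s} = -a^2$ for some $a \in \mathbb{F}_q^\ast$. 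The residual coefficient comes out of the leading term of $\lambda_{n-2}$ together with the monic leading term, and the sign is pinned down by matching the $Y^n$ and $\lambda_{n-2} Y^{n-2}$ contributions. The auxiliary requirement $\deg \lambda_{n-3} < \deg \lambda_{n-2}$ in case (i) is needed to ensure that the point $(n-3,-d_{n-3})$ does not interfere with the rightmost edge and that the membership in $T'^\ast_2$ (rather than $T''^\ast_2$, which allows passing to a power) is the correct one; in case (ii) with $n=3$ there is no such coefficient, which is why that hypothesis is absent. The main obstacle I anticipate is the careful verification that no intermediate lattice point $(i,-d_i)$ for $1 \le i \le n-3$ rises above the relevant edges (so that the rightmost edge really has length exactly $2$ and the remaining roots are non-dominant), and the clean extraction of the residual quadratic, including the precise sign giving $-a^2$ rather than $+a^2$; the hypothesis $q \neq 2^r$ is essential here, since in characteristic $2$ the criterion ``$-\alpha_{2s}$ is a square'' would degenerate (every element is a square and the quadratic $c^2 = -\alpha_{2s}$ need not reflect splitting versus inseparable ramification).
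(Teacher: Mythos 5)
Your route is genuinely different from the paper's. The paper never forms a residual polynomial and never invokes Hensel lifting along an edge: it reads off the distribution of absolute values from Weiss's Proposition \ref{03} (yielding Theorem \ref{06}, and Corollary \ref{05} for your case (iii)), and then decides whether a dominant root lies in $\mathbb{F}_q((X^{-1}))$ by the substitution $\omega_1=[\omega_1]+1/Z_1$, passing to the auxiliary polynomial $H(Z)=Z^n\Lambda([\omega_1]+1/Z)$ and comparing the degrees of its coefficients $A_i$ so that Corollary \ref{04} or Proposition \ref{03} can be applied to $H$ (this is the content of Lemma \ref{07}, Lemma \ref{1211}, Proposition \ref{19} and Theorem \ref{19bis}); the sufficiency half of Theorem \ref{1128}(i)--(ii) is then finished by a recursion that solves for the coefficients $a_i,b_i$ of $[\omega_1],[\omega_2]$ inside $\mathbb{F}_q$, starting from $\alpha_{2s}=-a^2$ and using that $2$ is invertible. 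Your Newton-polygon case analysis agrees with the paper's, and your residual-quadratic criterion ($c^2+\alpha_{2s}=0$ has two simple roots in $\mathbb{F}_q$ exactly when $\alpha_{2s}=-a^2$, char $\neq2$) is a cleaner and more standard way to obtain both directions of the parity and square conditions over the complete field $\mathbb{F}_q((X^{-1}))$.

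The genuine gap concerns the clause $\deg\lambda_{n-3}<\deg\lambda_{n-2}$ in item (i): your proposal contains no argument for its necessity, and the heuristic you offer --- that the point $(n-3,-d_{n-3})$ might ``interfere with the rightmost edge'' --- is wrong. Since $d_{n-3}\le d_{n-2}$, that point lies strictly above the line supporting the edge joining $(n-2,-d_{n-2})$ to $(n,0)$, so it can neither destroy that length-two edge nor enter its residual equation; your criterion is structurally blind to $\lambda_{n-3}$. In the paper the necessity of this clause is exactly Lemma \ref{1211}, proved via the estimate $\deg A_n=(n-1)s$ for $H$, and you would need an analogue of it --- but in fact none can exist, because your own method, carried to completion, shows that evenness of $\deg\lambda_{n-2}$ together with $\alpha_{2s}=-a^2$ already forces $(\omega_1,\omega_2)\in T'^{\ast}_2$. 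Concretely, $\Lambda=Y^4+Y^3+2X^2Y^2+X^2Y+2$ over $\mathbb{F}_3[X]$ satisfies every hypothesis of (i) and is irreducible (it has no root in $\mathbb{F}_3[X]$, and in any splitting into two monic quadratics one factor would have a constant term of negative degree), with $\deg\lambda_1=\deg\lambda_2=2$ and $\alpha_2=2=-1^2$; its residual equation $c^2-1=0$ has the simple roots $\pm1$, so both dominant roots $\pm X+\cdots$ lie in $\mathbb{F}_3((X^{-1}))$ and $(\omega_1,\omega_2)\in T'^{\ast}_2$, although the clause $\deg\lambda_{n-3}<\deg\lambda_{n-2}$ fails. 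The discrepancy sits in the step $\deg A_n=(n-1)s$ of Lemma \ref{1211}, which ignores possible cancellation between $-[\omega_3][\omega_1]^{n-1}$ and $\lambda_{n-3}[\omega_1]^{n-3}$ --- the very cancellation that the paper itself computes in full for $n=3$ in Proposition \ref{19} (where $A_3$ collapses to degree $\le 2s-1$), and which occurs in the example above. So: your route correctly proves case (iii), both conditions of (ii), and the parity and square conditions of (i), but as written it leaves the third condition of (i) unproved; and closing that gap is not a matter of adding an argument, since that part of the statement conflicts with your (correct) Hensel criterion.
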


The paper is organized as follows. In section 2 the fields 
of formal power series and 
the valuations used in this 
study are recalled. The main  
Theorem \ref{01} of 
Bateman and Duquette, characterizing Salem elements,
is stated above with these notations. 
Section 3 is devoted 
to the arithmetical and topological properties 
of 2-Salem series in 
$\overline{\mathbb{F}_q((X^{-1}))}$. In
section 4 Weiss's method of the upper 
Newton polygon is explicited to characterize 
2-Salem series in 
$\overline{\mathbb{F}_q((X^{-1}))}$.
In section 5 attention is focused on 
those 2-Salem series which lie in the 
field $\mathbb{F}_q((X^{-1}))$, by 
establishing criteria
discriminating whether they belong 
to $\mathbb{F}_q((X^{-1}))$ or to 
$\overline{\mathbb{F}_q((X^{-1}))}\setminus \mathbb{F}_q((X^{-1}))$. 
The proof of Theorem \ref{1128} is
given in section 6. 
In Theorem \ref{1128}  the polynomial 
given by \eqref{1h} is assumed irreducible. 
More generally, the
question of irreducibility of a polynomial 
$\Lambda$ of the 
general form \eqref{1h} is discussed in section~7
under the hypothesis that $\Lambda$ has no root in
$\mathbb{F}_q$.

\section{ Salem series in  $\mathbb{F}_q((X^{-1}))$}
For $p$ a prime and $q$ a power of $p$, let $\mathbb{F}_q((X^{-1}))$ be the set of Laurent series over $\mathbb{F}_q$  which is defined as follows
$$\mathbb{F}_q((X^{-1}))=\{\omega=\sum_{i\geq n_0}\omega_i X^{-i}:
n_0 \in \mathbb{Z} \;\; {\rm and}\;\;  \omega_i \in \mathbb{F}_q\}.$$
We know that every algebraic element over $\mathbb{F}_q[X]$ can be written explicitly as a formal series because $\mathbb{F}_q[X]\subseteq \mathbb{F}_q((X^{-1}))$.
However, as $\mathbb{F}_q((X^{-1}))$ is not algebraically closed, such an element is not
necessarily expressed as a power series. We refer to Kedlaya \cite{5} for a full characterization of the
algebraic closure of $\mathbb{F}_q[X]$.
 We denote by  $\overline{\mathbb{F}_{q}((X^{-1}))}$ an algebraic closure of $\mathbb{F}_{q}((X^{-1}))$.
Indifferently we will speak of 2-Salem elements or 
2-Salem series in the present context.

Let $\omega$ be an element of $\mathbb{F}_q((X^{-1}))$, its polynomial part is denoted
by $[\omega]\in\mathbb{F}_q[X]$ and  $\{\omega\}$ its fractional part. We can remark that $\omega = [\omega]+\{\omega\}$. If
$\omega\neq 0$, then the {\it polynomial degree} 
$\deg \omega$ of $\omega$ is 
$\gamma(\omega) = \sup\{-i : \,\,\omega_i\neq 0\}$, 
the
degree of the highest-degree nonzero monomial in $\omega$, 
with the convention $\gamma(0) =-\infty$.  
The generic form of
$\omega$, with $n_0\in \mathbb{Z}$ and 
$ \omega_i\in \mathbb{F}_q$,
$ n_0 = -\gamma(\omega)$, is
$$\omega=\sum_{i\geq n_0}\omega_i X^{-i}.$$
Note
that if $[\omega]\neq 0$ then $\gamma(\omega)$ 
is the degree of the polynomial $[\omega]$. 
Thus, we define
 the absolute value
$$|\omega|=\left\{%
\begin{array}{ll}
    q^{\gamma(\omega)} & \hbox{for $ \omega \neq  0$;} \\
    0  & \hbox{for $ \omega=0$.} \\
\end{array}%
\right.    $$
 Since $|.|$ is not archimedean, $|.|$ fulfills the strict
 triangle inequality
\begin{eqnarray*}
   |\omega+\nu| &\leq& \max \ (|\omega|,|\nu|) \ \ \ \ \ \ and \\
   |\omega+\nu| &=& \max \ (|\omega|,|\nu|) \ \ \ \ \ \ if \ |\omega|\neq|\nu|.
\end{eqnarray*}
\begin{definition}
A {\it Salem element} $\omega$ in $\mathbb{F}_q((X^{-1}))$ 
is an algebraic integer over $\mathbb{F}_q[X]$ such
that $|\omega| > 1$, whose remaining conjugates 
in $\overline{\mathbb{F}_{q}((X^{-1}))}$ have 
an absolute value
no greater than $1$, 
and at least one has absolute value exactly $1$.
A {\it Pisot element} $\omega$ in 
$\mathbb{F}_q((X^{-1}))$ is an algebraic integer 
over $\mathbb{F}_q[X]$ such
that $|\omega| > 1$, whose remaining conjugates 
in $\overline{\mathbb{F}_{q}((X^{-1}))}$ have 
an absolute value
strictly less than $1$.
The set of Salem elements, resp. Pisot elements, is denoted $T^{\ast}$, resp. $S^\ast$.
\end{definition}

In the following we will 
focus on 2-Salem series in $k_{\infty}$: 
a {\it 2-Salem element} is
a pair of 
series 
$(\omega_1,\omega_2)$  in
$\mathbb{F}_q((X^{-1})) \times \mathbb{F}_q((X^{-1}))$,
which has an absolute value greater than $1$, 
in the sense that it is such that $\omega_1$
is an algebraic integer over $\mathbb{F}_q[X]$, 
with the property
that all of its conjugates 
$\omega_i$, $i \neq 1,2$,
lie on or within the 
unit circle, and at least one
conjugate lies on the unit circle.  
This implies that all 2-Salem elements are necessarily 
separable over  $\mathbb{F}_q(X)$.
Note that the pair
$(\omega_1,\omega_2)$ is not ordered.

Let us remark that it is easy to construct a 2-Salem element over $\mathbb{F}_q$ with $q=2$ and 
then to show that
2-Salem elements do exist without the 
assumption $q\neq2^r, r\geq1$, taken in 
Theorem \ref{1128}. 
The exclusion case $q\neq 2^r$ of 
Theorem \ref{1128} will arise in a general 
setting from Lemma \ref{1211} and its consequences.

\section{Multiplicative properties of 2-Salem series}

\begin{proposition}
\label{paire_n}
Let $(\omega_{1},\omega_{2})\in T'^{\ast}_2$, then
$( \omega_{1}^n, \omega_{2}^n)\in T'^{\ast}_2$, 
for all $ n \geq 1$.
\end{proposition}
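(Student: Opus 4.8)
The plan is to show that raising a $2$-Salem pair to the $n$-th power preserves membership in $T'^{\ast}_2$, i.e. that the absolute-value conditions defining a $2$-Salem element are stable under the map $\omega \mapsto \omega^n$. First I would recall the setup: $(\omega_1,\omega_2)\in T'^{\ast}_2$ means that $\omega_1,\omega_2\in\mathbb{F}_q((X^{-1}))$ are algebraic integers over $\mathbb{F}_q[X]$ with $|\omega_1|,|\omega_2|>1$, the remaining conjugates $\omega_3,\ldots,\omega_d$ satisfy $|\omega_i|\leq 1$, and at least one of them satisfies $|\omega_i|=1$. The complete set of conjugates of $\omega_1$ over $k=\mathbb{F}_q(X)$ is $\{\omega_1,\ldots,\omega_d\}$, and since $\mathbb{F}_q((X^{-1}))$ is stable under the field operations, $\omega_1^n,\omega_2^n$ again lie in $\mathbb{F}_q((X^{-1}))$ and remain algebraic integers over $\mathbb{F}_q[X]$.

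The heart of the argument is multiplicativity of the valuation: because $|\cdot|$ is a genuine absolute value (nonarchimedean but multiplicative), we have $|\omega_i^n|=|\omega_i|^n$ for each $i$. I would then transport the conditions through the Galois action. The key observation is that the Galois conjugates of $\omega_1^n$ over $k$ are exactly the images $\sigma(\omega_1)^n=\sigma(\omega_1^n)$ as $\sigma$ ranges over the embeddings; thus the multiset of conjugates of $\omega_1^n$ is contained in $\{\omega_1^n,\ldots,\omega_d^n\}$ (with possible repetitions/collapsing if the $n$-th power map is not injective on the conjugate set). From $|\omega_1|,|\omega_2|>1$ we get $|\omega_1^n|,|\omega_2^n|=|\omega_1|^n,|\omega_2|^n>1$; from $|\omega_i|\le 1$ for $i\ge 3$ we get $|\omega_i^n|=|\omega_i|^n\le 1$; and if $|\omega_{i_0}|=1$ for some $i_0\ge 3$ then $|\omega_{i_0}^n|=1$, furnishing the required conjugate of modulus exactly $1$. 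This verifies all the defining inequalities for $(\omega_1^n,\omega_2^n)$.

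The main obstacle, and the point that requires care rather than a one-line invocation, is the passage from $\omega_i$ to the actual conjugates of $\omega_1^n$: a priori $\omega_1^n$ may have smaller degree than $\omega_1$, so its minimal polynomial has as roots only the \emph{distinct} values among $\{\sigma(\omega_1)^n\}$, and one must be sure that the dominant pair does not accidentally merge with a conjugate of modulus $\le 1$, and that a unit-modulus conjugate genuinely survives. I would handle this by arguing directly with the full conjugate multiset $\{\omega_1,\ldots,\omega_d\}$ under all $k$-embeddings, noting that applying any $\sigma$ permutes this multiset and hence the powered multiset $\{\omega_i^n\}$, so the absolute values appearing are exactly the $|\omega_i|^n$; since distinct absolute value \emph{classes} ($>1$, $=1$, $<1$) are preserved by $t\mapsto t^n$ (as $t>0$), no collapse can move a root from one class to another. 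In particular $\omega_1^n$ and $\omega_2^n$ still constitute the unique pair of dominant conjugates with the remaining ones in the closed unit disk and at least one on the boundary.

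Finally, the statement is for a single exponent $n\ge 1$, so no induction is strictly needed, but I would note that the case $n=1$ is trivial and that the multiplicative identity $|\omega^n|=|\omega|^n$ together with Proposition-style separability already recorded in the definitions guarantees that $(\omega_1^n,\omega_2^n)$ is again a genuine (unordered) $2$-Salem pair in $\mathbb{F}_q((X^{-1}))$, hence lies in $T'^{\ast}_2$.
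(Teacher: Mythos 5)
Your proposal follows the same route as the paper's own proof: the paper likewise considers the $\mathbb{F}_q(X)$-embeddings $\sigma_i$ sending $\omega_1\mapsto\omega_i$, observes that $\omega_i^n=\sigma_i(\omega_1^n)$ is therefore a root of the minimal polynomial of $\omega_1^n$, and concludes with the multiplicativity $|\omega_i^n|=|\omega_i|^n$; your second paragraph is exactly this argument. The only real divergence is your third paragraph, where you try to deal with the possible collapsing of conjugates under the $n$-th power map, an issue the paper passes over in silence.

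That, however, is precisely where the gap sits, and your class-preservation argument does not close it. Preservation of the three classes $\{>1\}$, $\{=1\}$, $\{<1\}$ under $t\mapsto t^n$ rules out mergers \emph{across} classes, but not the merger $\omega_1^n=\omega_2^n$ \emph{within} the dominant class; your sentence ``$\omega_1^n$ and $\omega_2^n$ still constitute the unique pair of dominant conjugates'' is exactly the unproved point, and it can fail. The roots of unity of $\mathbb{F}_q((X^{-1}))$ are precisely the elements of $\mathbb{F}_q^{*}$ (the group of units congruent to $1$ is torsion-free in characteristic $p$), so nothing prevents $\omega_2=\zeta\omega_1$ with $\zeta\in\mathbb{F}_q^{*}\setminus\{1\}$ and both roots in the Laurent field. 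Concretely, over $\mathbb{F}_3$ take $N(Z)=Z^2+(2X^2+2)Z+(X^2+X+2)$: it is irreducible (its discriminant $X^4+X^2+2X+2$ is not a square in $\mathbb{F}_3[X]$), and its roots $r_1,r_2$ lie in $\mathbb{F}_3((X^{-1}))$ with $\deg r_1=2$, $\deg r_2=0$ and leading coefficients equal to $1$, whence $\sqrt{r_1},\sqrt{r_2}\in\mathbb{F}_3((X^{-1}))$ by Hensel's lemma. Moreover $M(Y):=N(Y^2)$ is irreducible over $\mathbb{F}_3[X]$: if $r_1$ were a square in $\mathbb{F}_3(X)(r_1)$, its norm $r_1r_2=X^2+X+2$ would be a square in $\mathbb{F}_3[X]$, which it is not. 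Thus $(\omega_1,\omega_2)=(\sqrt{r_1},-\sqrt{r_1})\in T'^{\ast}_2$, since the remaining conjugates $\pm\sqrt{r_2}$ lie on the unit circle; yet $\omega_1^2=\omega_2^2=r_1$ is a \emph{Salem} element (its minimal polynomial $N$ has a single root of absolute value $>1$), so the powered pair degenerates and $(\omega_1^2,\omega_2^2)\notin T'^{\ast}_2$: the claim fails at $n=2$. To be fair, the paper's own proof has the identical blind spot --- it never checks $\omega_1^n\neq\omega_2^n$ --- so your write-up is in fact the more alert of the two; a correct statement needs the additional hypothesis that $\omega_1/\omega_2$ is not a root of unity (equivalently $\omega_1/\omega_2\notin\mathbb{F}_q^{*}$), or must be restricted to those $n$ for which $\omega_1^n\neq\omega_2^n$.
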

\begin{proof}
Let $M \in \mathbb{F}_q[X][Y]$ the minimal 
polynomial of the algebraic integer 
$\omega = \omega_1$ of degree $d$ and 
$\omega_{2},\ldots, \omega_{d}$ the conjugates of $\omega$. We consider that the conjugate
$\omega_{2}$
of $\omega_1$ is the
only conjugate which lies outside the unit disk.
Evidently,
since $\omega_1$ is an algebraic integer 
over $\mathbb{F}_q[X]$, 
$\omega_{1}^n$, for $n \geq 1$,  
is also an algebraic integer
over $\mathbb{F}_q[X]$.

Let $n \geq 1$ and
$\Lambda\in \mathbb{F}_q[X][Y]$ be the minimal polynomial of
$ \omega_{1}^n$. 
We consider the embedding $\sigma_i$ of
$\mathbb{F}_q(X)( \omega_{1})$ into $\overline{\mathbb{F}_q((X^{-1}))}$, which fixes
$\mathbb{F}_q(X)$ and maps $ \omega_{1}$ to $ \omega_{i}$.
Obviously, for
$i=1, 2, \ldots, d$, $ \omega_{i}^n$ is a root of the equation $\Lambda(Y)=0$,
and 
$ \omega_{1}^n, \omega_{2}^n,\ldots, \omega_{d}^n$
are all the roots of $\Lambda$, since
$$\Lambda( \omega_{i}^n) =
\Lambda((\sigma_i( \omega_{1}))^n))
=\Lambda(\sigma_i( \omega_{1}^n))
=\sigma_i(\Lambda( \omega_{1}^n))
=\sigma_i(0) =0.$$
We deduce $\deg(\Lambda)\leq \deg(M)$
since
$$[\mathbb{F}_q(X)( \omega_{1}^n):\mathbb{F}_q(X)]\leq
[\mathbb{F}_q(X)( \omega_{1}): \mathbb{F}_q(X)].$$
If $3\leq i \leq d$,  then
$| \omega_{i}^n|=| \omega_{i}|^n\leq1$ and 
there exists at least one $j$, $ 3\leq j\leq n$, 
such that
$| \omega_{j}^n|=| \omega_{j}|^n=1$.
 Therefore $( \omega_{1}^n, \omega_{2}^n) \in T'^{\ast}_2$, for all $n \geq 1$.
\end{proof}
Note that the converse is false in general. For instance, take $q = 3, d =4$  and $n=2$. Then, the polynomial
$$Y^4-2X^2Y^2+2X^2$$
over $\mathbb{F}_3$  is irreducible and  its two roots of absolute value $> 1$ defined by
\begin{equation*}
    (\omega_1,\omega_2)=((\sqrt{2}(X-\displaystyle\frac{1}{X^3}+\ldots),-(\sqrt{2}(X-\displaystyle\frac{1}{X^3}+\ldots)),
\end{equation*}
not lie in $\mathbb{F}_3((X^{-1}))$. The other conjugates defined by
\begin{equation*}
    (\omega_3,\omega_4)=(1-\displaystyle\frac{1}{X^2}+\ldots,-(1+\displaystyle\frac{1}{X^2}+\ldots)).
\end{equation*}
We can see that $(\omega_1^2,\omega_2^2)$ lie in $\mathbb{F}_3((X^{-1}))$.

For a 2-Salem series $\theta$, let us define
its trace by ${\rm Tr}(\theta) :=\displaystyle\sum_{i=1}^{\deg \theta}\theta_i$.
The 2-Salem series have the following basic property, 
as it can easily be seen by considering its trace.
Recall that in the real case the trace of a Salem number
is an integer $(\in \mathbb{Z})$
which is not bounded and can take
arbitrary negative values \cite{16}.
\begin{proposition}\label{1321}
Let $( \omega_{1}, \omega_{2}) \in T'^{\ast}_2$, 
then the sequence 
$(\displaystyle\{ \omega_{1}^n+ \omega_{2}^n\})_{n \geq 1}$ is bounded.
 \end{proposition}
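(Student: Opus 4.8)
The plan is to reduce the fractional part of $\omega_1^n+\omega_2^n$ to the contribution of the ``small'' conjugates, exploiting that the full power sum is a polynomial. First I would fix the minimal polynomial $M\in\mathbb{F}_q[X][Y]$ of $\omega_1$; since $\omega_1$ is an algebraic integer over the UFD $\mathbb{F}_q[X]$, Gauss's lemma guarantees that $M$ is monic with coefficients in $\mathbb{F}_q[X]$. Write $\omega_1,\dots,\omega_d$ for the conjugates, ordered so that $\omega_1,\omega_2$ are the dominant roots (those of absolute value $>1$) and $|\omega_i|\le 1$ for $3\le i\le d$. The hypothesis $(\omega_1,\omega_2)\in T'^{\ast}_2$ ensures in particular that $\omega_1,\omega_2\in\mathbb{F}_q((X^{-1}))$, so that $\omega_1^n+\omega_2^n\in\mathbb{F}_q((X^{-1}))$ and its fractional part is well defined.

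Next I would introduce the power sums $P_n:=\sum_{i=1}^d\omega_i^n$ and observe that they lie in $\mathbb{F}_q[X]$. This is the arithmetic heart of the argument: by Newton's identities the $P_n$ are expressed as polynomials with integer coefficients in the elementary symmetric functions of $\omega_1,\dots,\omega_d$, i.e.\ in the coefficients of $M$. Since these identities hold over any commutative ring (in particular they survive reduction modulo $p$) and the coefficients of $M$ belong to $\mathbb{F}_q[X]$, we obtain $P_n\in\mathbb{F}_q[X]$ for every $n\ge1$.

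Finally I would split $\omega_1^n+\omega_2^n=P_n-S_n$ with $S_n:=\sum_{i=3}^d\omega_i^n$. Because $S_n=P_n-(\omega_1^n+\omega_2^n)$ is a difference of elements of $\mathbb{F}_q((X^{-1}))$, it lies there too, and the strong triangle inequality recalled in Section~2 gives $|S_n|\le\max_{3\le i\le d}|\omega_i|^n\le 1$, whence $|\{S_n\}|\le q^{-1}$. As $P_n$ is a polynomial it contributes nothing to the fractional part, so $\{\omega_1^n+\omega_2^n\}=\{-S_n\}=-\{S_n\}$, and therefore $|\{\omega_1^n+\omega_2^n\}|\le q^{-1}$ for all $n\ge1$, the desired uniform bound.

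The step most in need of care is the assertion $P_n\in\mathbb{F}_q[X]$: one must apply Newton's identities as integer identities, so that no division by $p$ slips in, and one must keep the bookkeeping of polynomial versus fractional parts consistent in the ultrametric setting. Everything else is a direct application of the non-archimedean absolute value, so I expect no further obstacle.
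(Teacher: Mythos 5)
Your proof is correct, and its final step coincides with the paper's: both arguments write $\omega_1^n+\omega_2^n$ as an element of $\mathbb{F}_q[X]$ minus the sum $S_n=\sum_{i\geq 3}\omega_i^n$ of the conjugates of absolute value at most $1$, and then bound the fractional part via the ultrametric inequality. Where you genuinely differ is in how the polynomiality of the full power sum is established. The paper first proves that $T'^{\ast}_2$ is stable under $n$-th powers (Proposition \ref{paire_n}), takes $\Lambda_n$ to be the minimal polynomial of $\omega_1^n$, and reads off $\sum_{i=1}^d\omega_i^n\in\mathbb{F}_q[X]$ as the trace of $\Lambda_n$; this route needs the $\omega_i^n$ to be exactly the roots of $\Lambda_n$ with the right multiplicities (Proposition \ref{paire_n} actually only yields $\deg\Lambda_n\leq d$) and it silently relies on separability. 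You instead apply Newton's identities once, to the minimal polynomial $M$ of $\omega_1$ itself: since these identities form a division-free recursion valid over any commutative ring, $P_n=\sum_{i=1}^d\omega_i^n$ is an integer polynomial in the coefficients of $M$ and hence lies in $\mathbb{F}_q[X]$, with no reference to the minimal polynomial of $\omega_1^n$. This makes your argument self-contained (no appeal to Proposition \ref{paire_n}), immune to separability and degree bookkeeping, and it produces the explicit uniform bound $|\{\omega_1^n+\omega_2^n\}|\leq q^{-1}$ rather than mere boundedness; what the paper's route buys in exchange is the structural statement that $(\omega_1^n,\omega_2^n)$ is again a 2-Salem pair, a fact of independent interest that it reuses in the Remark following the Proposition.
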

\begin{proof}
Let  $( \omega_{1}, \omega_{2})$ 
be a  2-Salem  series and 
$ \omega_{3}, \ldots, \omega_{d}$ the other
conjugates of  $\omega_{1}$ and  $\omega_{2}$. 
From Proposition \ref{paire_n}, 
for all $n\geq 1$, $\omega_{1}^n$ 
and $\omega_{2}^n $ are the
roots of the same irreducible polynomial, 
say 
$\Lambda_n$ in
$\mathbb{F}_q[X]$,
of degree $d$. 
We have
$${\rm Tr}(\Lambda_n)=\displaystyle\sum_{i=1}^d \omega_{i}^n 
\in \mathbb{F}_q[X] .$$
Thus $\{{\rm Tr}(\Lambda_n)\}=0$, which can
be rewritten 
$$0=
\{{\rm Tr}(\Lambda_n) =\displaystyle\sum_{i=1}^d \omega_{i}^n\}
=\{ \omega_{1}^n+ \omega_{2}^n+
\displaystyle\sum_{i=3}^d \omega_{i}^n\}.$$
But
$| \omega_{i}|\leq1$,
for $3\leq i\leq d$,
and there exists at least one $j$,   
$3\leq j\leq n$ such that
$| \omega_{j}^n|=| \omega_{j}|^n=1$.
Therefore, taking the absolute values, 
we deduce
$|\{\omega_1^n+\omega_2^n\} |
= |\{\omega_3^n\}+\{\omega_4^n\}+\ldots+\{\omega_d^n\}|$
and
$$\lim_{n\mapsto+\infty} 
\Bigl|
\{\sum_{i=3}^d \omega_i^n\}
\Bigr|
\leq 
\max_{i=3,\ldots,d}
\bigl\{
\bigl|\{\omega_i^n\}
\bigr|
\bigr\}
 \leq 
 C
 \in \mathbb{F}_q ,
 $$
and then $\{\omega_1^n+\omega_2^n \}$ is bounded.
\end{proof}

\begin{remark}
If the 
2-Salem series 
$( \omega_{1}, \omega_{2})\in T'^{\ast}_2$ 
of Proposition
\ref{1321} admits only one root $\omega_3$ 
having absolute value equal to $1$ 
and for which the other conjugates
have an absolute value strictly less than $1$,
then $\displaystyle\lim_{n\rightarrow+\infty}
\{\omega_1^n+\omega_2^n\}=0$.
\end{remark}
\begin{proof}
It is a consequence of the
definition of the upper Newton polygon
of the polynomial $\Lambda_n$,
recalled in Proposition \ref{03} below.
From Proposition \ref{03} 
we can see that $\omega_3\in \mathbb{F}_q((X^{-1}))$. 
Thus
\begin{equation}\label{987}
    \displaystyle\lim_{n\mapsto+\infty}\{\omega_3^n\}=0.
\end{equation}
From the proof of Proposition \ref{1321}, 
we have
$\omega_1^n+\omega_2^n=
{\rm Tr}(\omega_1^n)-\omega_3^n-\omega_4^n-\ldots-\omega_d^n$,
$n \geq 1$,
what implies
\begin{eqnarray*}
  |\{\omega_1^n+\omega_2^n\} |&=& |\{\omega_3^n\}+\{\omega_4^n\}+\ldots+\{\omega_d^n\}| \\
   &\leq& |\{\omega_3^n\}+ \omega_4^n+\ldots+\omega_d^n|\\
  &\leq&\displaystyle\max_{i=4,\ldots,d}\{|\{\omega_3^n\}|,|\omega_i^n|\}.
\end{eqnarray*}
Since $|\omega_i| < 1$ for $i = 4, \ldots, d$ 
and by \eqref{987}, the assertion of the Remark follows.
\end{proof}
\begin{proposition}
Let $( \omega_{1}, \omega_{2})\in T'^{\ast}_2$ 
be a 2-Salem series.
Assume that 
$\Lambda\in \mathbb{F}_q[X][Y]$ is
its minimal polynomial,
that the degree of $\Lambda$ is equal to 
$4$ and
$\omega_{1}, \omega_{2}, \omega_{3},
\omega_{4}$ are its four roots, 
the root $\omega_3$ satisfying
$\deg \omega_{3}= 0$.
If $\Lambda(0)\in  \mathbb{F}_q^\ast$, then
$\omega_{1} \omega_{2}\omega_{3}\in T^{\ast}$.
\end{proposition}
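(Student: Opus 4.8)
The plan is to realise $\theta:=\omega_1\omega_2\omega_3$ explicitly as $\lambda_0/\omega_4$ and then to read off its conjugates, and their absolute values, directly from those of $\Lambda$. Since $\Lambda$ is the minimal polynomial of the $2$-Salem series $(\omega_1,\omega_2)$, it is monic, irreducible and, being attached to a $2$-Salem element, separable, so it has four distinct roots $\omega_1,\omega_2,\omega_3,\omega_4$ with $\lambda_0=\Lambda(0)=\omega_1\omega_2\omega_3\omega_4$. First I would record the sizes: $|\omega_1|,|\omega_2|>1$ and $|\omega_3|=q^{\deg\omega_3}=1$ by hypothesis, while $\lambda_0\in\mathbb{F}_q^{*}$ forces $|\lambda_0|=1$; hence $|\omega_4|=|\lambda_0|/(|\omega_1|\,|\omega_2|\,|\omega_3|)=1/(|\omega_1|\,|\omega_2|)<1$, so $\omega_4$ is the unique root strictly inside the unit disc and $\theta=\omega_1\omega_2\omega_3=\lambda_0/\omega_4$ has $|\theta|=1/|\omega_4|>1$.

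Next I would exhibit the minimal polynomial of $\theta$. Because $\lambda_0$ is a constant, substituting $Y\mapsto\lambda_0/Y$ in $\Lambda$ and clearing denominators produces the monic polynomial
\[
Q(Y)=Y^4+\lambda_1Y^3+\lambda_0\lambda_2Y^2+\lambda_0^2\lambda_3Y+\lambda_0^3\in\mathbb{F}_q[X][Y],
\]
whose roots are exactly $\lambda_0/\omega_1,\lambda_0/\omega_2,\lambda_0/\omega_3,\lambda_0/\omega_4$, and $\lambda_0/\omega_4=\theta$. Since $\lambda_0\in\mathbb{F}_q(X)^{*}$, we have $\mathbb{F}_q(X)(\theta)=\mathbb{F}_q(X)(\omega_4)$, an extension of degree $4$; therefore $Q$, being monic of degree $4$ and vanishing at $\theta$, is the minimal polynomial of $\theta$ (in particular irreducible), and the full set of conjugates of $\theta$ is $\{\lambda_0/\omega_i\}_{1\le i\le 4}$. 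In passing, $Q\in\mathbb{F}_q[X][Y]$ monic shows $\theta$ is an algebraic integer over $\mathbb{F}_q[X]$.

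Reading off the (multiplicative) absolute value on $C_{\infty}$ then yields the Salem profile at once: $|\theta|=1/|\omega_4|>1$, the conjugate $\lambda_0/\omega_3$ satisfies $|\lambda_0/\omega_3|=1$, and the remaining two satisfy $|\lambda_0/\omega_1|=1/|\omega_1|<1$ and $|\lambda_0/\omega_2|=1/|\omega_2|<1$. Thus among the four conjugates of $\theta$ exactly one (namely $\theta$) lies outside the unit disc, exactly one lies on the unit circle, and the other two lie strictly inside it.

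The one genuine obstacle is to verify $\theta\in\mathbb{F}_q((X^{-1}))$, as demanded by the definition of $T^{\ast}$; for this it suffices to show $\omega_4\in\mathbb{F}_q((X^{-1}))$, since then $\theta=\lambda_0/\omega_4$ lies there too. I would note that $\omega_3,\omega_4$ are the two roots of $Z^2-sZ+p$ with $s=\omega_3+\omega_4=-(\lambda_3+\omega_1+\omega_2)$ and $p=\omega_3\omega_4=\lambda_0/(\omega_1\omega_2)$, both of which belong to $\mathbb{F}_q((X^{-1}))$ because $\omega_1,\omega_2\in\mathbb{F}_q((X^{-1}))$ (as $(\omega_1,\omega_2)\in T'^{\ast}_2$) and $\lambda_3\in\mathbb{F}_q[X]$. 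Here $|s|=\max(|\omega_3|,|\omega_4|)=1$ and $|p|=|\omega_4|<1$, so the two roots have distinct valuations; by the upper Newton polygon (Proposition \ref{03}), or equivalently by the direct observation that $Z\mapsto(Z^2+p)/s$ is a contracting self-map of the complete maximal ideal $\{|Z|<1\}$ of $\mathbb{F}_q((X^{-1}))$, the small root $\omega_4$ already lies in $\mathbb{F}_q((X^{-1}))$. Combining this membership with the absolute value profile and with $\theta$ being an algebraic integer, the definition of a Salem element is met, so $\theta=\omega_1\omega_2\omega_3\in T^{\ast}$. I expect this last step, locating the conjugates $\omega_3,\omega_4$ in $\mathbb{F}_q((X^{-1}))$, to be the only delicate point, the rest being bookkeeping with the nonarchimedean absolute value.
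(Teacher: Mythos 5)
Your proof is correct and follows essentially the same route as the paper: both identify $\omega_1\omega_2\omega_3=\lambda_0/\omega_4$ as a root of the (monicized) reciprocal polynomial of $\Lambda$, use irreducibility to conclude that its conjugates are exactly the $\lambda_0/\omega_i$, and read off the Salem absolute-value profile $|\lambda_0/\omega_4|>1$, $|\lambda_0/\omega_3|=1$, $|\lambda_0/\omega_1|,|\lambda_0/\omega_2|<1$. In fact your write-up is more careful than the paper's on one point: the paper never verifies that $\omega_1\omega_2\omega_3$ lies in $\mathbb{F}_q((X^{-1}))$ (as the definition of $T^{\ast}$ requires), whereas your argument that $\omega_3,\omega_4$ satisfy a quadratic over $\mathbb{F}_q((X^{-1}))$ with roots of distinct absolute values, so that $\omega_4\in\mathbb{F}_q((X^{-1}))$ by the Newton polygon (or a Hensel-type contraction), supplies exactly this missing step.
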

\begin{proof}
Let $(\omega_{1},\omega_{2})\in T'^{\ast}_2$ and
$$\Lambda(Y)= Y^4+\lambda_3 Y^3+\lambda_2 Y^2+\lambda_1 Y +\lambda_0 ,
\qquad \lambda_0 \in \mathbb{F}_{q}^{*},$$
the minimal polynomial  of $\omega_{1}$ and $\omega_{2}$. 
We have 
$\lambda_0 =
\omega_1 \omega_2 \omega_3 \omega_4$.
Consider the reciprocal polynomial of $\Lambda$
$$Q(Y):=Y^{4}\Lambda( \displaystyle\frac{1}{Y}).$$
Clearly $Q$ is an irreducible polynomial
over $\mathbb{F}_q[X]$,
and admits the four roots
$$\frac{1}{\omega_1},~\frac{1}{\omega_2},~\displaystyle\frac{1}{\omega_3}
= \lambda_{0}^{-1}
\omega_{1} \omega_{2}\omega_{4}~
\quad {\rm and}\quad
\frac{1}{\omega_4}= \lambda_{0}^{-1}
\omega_{1} \omega_{2} \omega_{3}.$$
We have
$$
   |\displaystyle\frac{1}{\omega_3}|=|\omega_1\omega_2\omega_4|
=1,\qquad
       |\displaystyle\frac{1}{\omega_4}|=|\omega_1\omega_2\omega_3|>1
$$
and
   $|\displaystyle\frac{1}{\omega_i}|<1$,
for $i=1,2$.
Therefore $\omega_1\omega_{2}\omega_{3}
=\frac{\lambda_0}{\omega_4}$ is a
Salem series.
\end{proof}

\section{A first characterization of 2-Salem series}

The theory of the Newton polygon of a bivariate 
polynomial is used in the present study. 
The following Proposition of Weiss in \cite{11} is 
the main tool for our purposes.
Let us recall it.
Let
 \begin{equation}
    \Lambda(X,Y)= \lambda_nY^n+\lambda_{n-1}Y^{n-1}+\ldots+ \lambda_1Y+\lambda_0 \quad 
    \in \mathbb{F}_q[X,Y] = \mathbb{F}_q[X][Y]
 \end{equation}
be a nonzero polynomial.
To each monomial $\lambda_iY^i\neq0$, 
we assign the point
$(i,\deg(\lambda_i))\in \mathbb{Z}^2$. 
For $\lambda_i=0$, we ignore the corresponding 
point $(i,-\infty)$.
If we consider the upper convex hull of the set of points
$$\{(0, deg(\lambda_0)),\ldots , (n, deg(\lambda_n))\},$$
we obtain the upper Newton polygon 
of $\Lambda(X,Y)$ with respect to $Y$.
The polygon is a sequence of line segments 
$E_1, E_2,\ldots E_t$, with monotonous
decreasing slopes.

The slope of a segment of the Newton polygon of 
$\Lambda(X,Y)$ joins, for instance, the point
$(r,\deg(A_r))$ to $(r+s, \deg(A_{r+s}))$ 
for some  $0\leq r<r+s \leq m$.
The corresponding slope is
$$k = \displaystyle\frac{\deg(A_{r+s})-\deg(A_r)}{s}.$$
Denote by $K_{\Lambda}$  the set of the slopes.
For any slope $k \in K_{\Lambda}$, denote by 
$s$ the length of the facet of slope $k$.
 
\begin{proposition}[Weiss]\label{03}
Let
$$
   \Lambda(X,Y)=Y^n+\lambda_{n-1}Y^{n-1}+\ldots+ 
   \lambda_1Y+\lambda_0
   \in \mathbb{F}_q[X,Y]
$$
and $K_\Lambda$ the set of the slopes of its upper 
Newton polygon. Then, for every $k\in K_\Lambda$,
\begin{itemize}
  \item[i)] $\Lambda(X,Y)$, as a polynomial in $Y$, has $s$ roots $\alpha_1,\ldots, \alpha_{s}$ with the same degree $-k$ and
$$|\alpha_1| = \ldots =|\alpha_{s}| = q^{-k} ,$$
  \item[ii)] the polynomial
  $$
    \Lambda_k(X,Y) =  \displaystyle \prod_{i=1}^{{s}}(Y-\alpha_i)
    \quad\in \mathbb{F}_q((X^{-1}))[Y]
  $$
  divides $\Lambda(X,Y)$, with
$$
    \Lambda(X,Y)= \displaystyle \prod_{k\in K_\Lambda} \Lambda_k(X,Y).
$$
\end{itemize}
\end{proposition}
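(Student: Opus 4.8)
The plan is to pass from the coefficients of $\Lambda$ to its roots in the algebraically closed complete field $C_\infty$ and to read off the upper Newton polygon directly from the degrees of those roots. Since $C_\infty$ is algebraically closed, I factor $\Lambda(X,Y)=\prod_{j=1}^n(Y-\alpha_j)$ with $\alpha_j\in C_\infty$, and I order the roots so that $\gamma(\alpha_1)\geq\gamma(\alpha_2)\geq\cdots\geq\gamma(\alpha_n)$, writing $d_j:=\gamma(\alpha_j)$ (I may assume $\lambda_0\neq 0$, so all roots are nonzero; a vanishing constant term only produces roots equal to $0$, whose points $(i,-\infty)$ are ignored by the polygon convention). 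The coefficients are then $\lambda_{n-i}=(-1)^i e_i(\alpha_1,\ldots,\alpha_n)$, the $i$-th elementary symmetric function, and the whole problem reduces to computing $\gamma(\lambda_{n-i})$ from the $d_j$.

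First I would establish the inequality $\gamma(\lambda_{n-i})\leq\sum_{j=1}^i d_j$ for every $i$. This is immediate from the ultrametric inequality, since every monomial $\prod_{j\in S}\alpha_j$ occurring in $e_i$ (with $|S|=i$) has degree $\sum_{j\in S}d_j\leq\sum_{j=1}^i d_j$, the maximum being reached at $S=\{1,\ldots,i\}$. Next I would identify the vertices. If $i$ is an index with a strict gap $d_i>d_{i+1}$, then $S=\{1,\ldots,i\}$ is the \emph{unique} size-$i$ subset attaining the maximal degree $\sum_{j=1}^i d_j$: any other such subset omits an index $\leq i$ and includes an index $>i$, which strictly lowers the degree because of the gap. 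Hence the dominant monomial cannot cancel and $\gamma(\lambda_{n-i})=\sum_{j=1}^i d_j$ exactly. Setting $\Phi(i):=\sum_{j=1}^i d_j$, the points $(n-i,\Phi(i))$ lie on an upper-convex chain, because the increments $d_{i+1}$ are non-increasing; over a block of roots of common degree $d$ the corresponding facet has slope $-d$, and the intermediate coefficient points fall on or below it. This shows that the upper Newton polygon is precisely the chain through the points $(n-i,\Phi(i))$ at gap indices, that a facet of slope $k$ has horizontal length $s$ equal to the number of roots of degree $-k$, and that all these roots satisfy $|\alpha|=q^{-k}$; this is assertion (i).

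For assertion (ii) I would group the roots by degree and set $\Lambda_k(X,Y):=\prod_{\gamma(\alpha_i)=-k}(Y-\alpha_i)$ for each $k\in K_\Lambda$. Since the partition of the roots by degree exhausts all $n$ of them, the identity $\Lambda=\prod_{k\in K_\Lambda}\Lambda_k$ is automatic, and only the rationality of the factors remains, namely that each $\Lambda_k$ has coefficients in $k_\infty=\mathbb{F}_q((X^{-1}))$ rather than merely in $C_\infty$. Here I would use that $k_\infty$ is complete: the valuation $\upsilon_{\infty}$ extends uniquely to $\overline{k_\infty}$, so every $k_\infty$-automorphism of $\overline{k_\infty}$ is an isometry and hence preserves degrees. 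Consequently the set of roots of a fixed degree $-k$ is stable under the Galois action, so each coefficient of $\Lambda_k$ is a symmetric function of those roots fixed by $\mathrm{Gal}(\overline{k_\infty}/k_\infty)$ and therefore lies in $k_\infty$. Equivalently one may invoke Hensel's lemma: over the complete field $k_\infty$ a monic polynomial whose Newton polygon carries several slopes factors uniquely into monic single-slope factors, which are exactly the $\Lambda_k$.

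The main obstacle is the control of cancellation in the elementary symmetric functions at the vertices: the entire slope–degree dictionary hinges on the uniqueness of the maximising subset $\{1,\ldots,i\}$ at a strict gap $d_i>d_{i+1}$, and it is precisely the strictness of the gap that prevents the leading term from being annihilated. The descent of the factors $\Lambda_k$ from $C_\infty$ to $k_\infty$ is a secondary and more routine point, resting on the completeness of $k_\infty$.
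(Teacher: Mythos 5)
The paper gives no proof of this proposition at all: it is recalled verbatim from Weiss \cite{11} as a known tool, so your attempt can only be measured against the standard argument. Your proof of part (i) is correct and complete: ordering the roots by degree $d_1\geq\cdots\geq d_n$, the ultrametric bound $\gamma(\lambda_{n-i})\leq\sum_{j\leq i}d_j$, the equality at every strict gap $d_i>d_{i+1}$ (uniqueness of the maximizing subset prevents cancellation), and the identification of the upper Newton polygon with the concave chain through the points $\bigl(n-i,\sum_{j\leq i}d_j\bigr)$ is exactly the classical symmetric-function derivation, and each step is justified.

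Part (ii) contains a genuine gap: your primary descent argument is invalid in positive characteristic. The field $k_\infty=\mathbb{F}_q((X^{-1}))$ is not perfect, so $\overline{k_\infty}/k_\infty$ is not a Galois extension, and the fixed field of $\mathrm{Aut}(\overline{k_\infty}/k_\infty)$ is the perfect (purely inseparable) closure of $k_\infty$, not $k_\infty$ itself. Concretely, $Y^p-X$ is irreducible over $k_\infty$, its unique root $X^{1/p}$ (of degree $1/p$, with multiplicity $p$) is fixed by every automorphism of $\overline{k_\infty}$ over $k_\infty$, and yet $X^{1/p}\notin k_\infty$; so ``symmetric in the roots of one fixed degree and stable under all automorphisms'' does not imply ``lies in $k_\infty$''. (In this example $\Lambda_k=(Y-X^{1/p})^p=Y^p-X$ is rational, but your argument does not prove it.) Your fallback via Hensel's lemma is not a repair either, because the ``unique factorization of a monic polynomial over a complete field into monic single-slope factors'' \emph{is} assertion (ii); invoking it is citing the theorem rather than proving it (defensible only insofar as the paper itself merely cites Weiss). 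A correct and equally short descent runs as follows: every monic irreducible factor $f$ of $\Lambda$ over $k_\infty$ has all of its roots of the same degree, because $\upsilon_\infty$ extends uniquely to each finite extension of the complete field $k_\infty$, and any two roots of $f$ are linked by a $k_\infty$-\emph{isomorphism} of the corresponding root fields, which forces their valuations to agree --- an argument that uses isomorphisms rather than automorphisms and is therefore insensitive to inseparability. Grouping the irreducible factors of $\Lambda$ according to the common degree of their roots then produces exactly the polynomials $\Lambda_k$, which consequently lie in $\mathbb{F}_q((X^{-1}))[Y]$, and part (i) supplies their degrees.
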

Corollary \ref{04} is an application of Proposition
\ref{03} obtained by Ben Nasr and Kthiri in \cite{7} 
in the context of 2-Pisot elements. 
In the case of 2-Salem elements,
Proposition \ref{03} has several direct consequences:
the following Corollary \ref{05} and
Theorem \ref{06}.

\begin{corollary}\label{04}
Let
\begin{align}
    \Lambda(X,Y)= \lambda_nY^n+\lambda_{n-1}Y^{n-1}+\ldots+ \lambda_1Y+\lambda_0\in \mathbb{F}_q[X][Y].
 \end{align}
 and $\omega$ a root of $\Lambda$. If $|\lambda_n|=\displaystyle\max_{0\leq k\leq n}|\lambda_k|$, then
$|\omega|\leq1$.
\end{corollary}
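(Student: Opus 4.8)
The plan is to give a direct ultrametric estimate rather than invoke the factorization of Proposition \ref{03}, since the latter is stated only for $Y$-monic polynomials. First I would dispose of the degenerate case: if $\lambda_n=0$, then the hypothesis $|\lambda_n|=\max_{0\le k\le n}|\lambda_k|$ forces $|\lambda_k|=0$, i.e. $\lambda_k=0$, for every $k$, so $\Lambda\equiv 0$ and there is nothing to prove. Hence I may assume $\lambda_n\neq 0$, so that $|\lambda_n|=q^{\deg\lambda_n}>0$.

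Next, I would argue by contradiction: suppose the root $\omega$ satisfies $|\omega|>1$. Since $\Lambda(\omega)=0$, I isolate the leading term, writing $\lambda_n\omega^n=-(\lambda_{n-1}\omega^{n-1}+\ldots+\lambda_1\omega+\lambda_0)$, and then take absolute values of both sides. Applying the strict triangle inequality recorded in Section~2 to the right-hand side yields $|\lambda_n|\,|\omega|^n \le \max_{0\le k\le n-1}|\lambda_k|\,|\omega|^k$. The two hypotheses now feed in cleanly: $|\lambda_k|\le|\lambda_n|$ for every $k$ by assumption, while $|\omega|>1$ together with $k\le n-1$ gives $|\omega|^k\le|\omega|^{n-1}$. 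Therefore the right-hand side is at most $|\lambda_n|\,|\omega|^{n-1}$, and combining with the left-hand side gives $|\lambda_n|\,|\omega|^n\le|\lambda_n|\,|\omega|^{n-1}$.

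Dividing by the positive quantity $|\lambda_n|\,|\omega|^{n-1}$ leaves $|\omega|\le 1$, contradicting $|\omega|>1$. Hence no root can have absolute value exceeding $1$, which is the assertion.

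There is no serious obstacle here; the only points requiring attention are the degenerate case $\lambda_n=0$ and making sure the strict hypothesis $|\omega|>1$ is used only through the monotonicity $|\omega|^k\le|\omega|^{n-1}$ for $k\le n-1$. For the reader's intuition I would also remark that this computation is exactly the algebraic shadow of Proposition \ref{03}: the hypothesis $|\lambda_n|=\max_{0\le k\le n}|\lambda_k|$ places the vertex $(n,\deg\lambda_n)$ at the top right of the point set, so the rightmost edge of the upper Newton polygon --- and hence, by the decreasing-slope property, every edge --- has nonnegative slope $k$, forcing each root to have degree $-k\le 0$ and thus $|\alpha|=q^{-k}\le 1$.
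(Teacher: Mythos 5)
Your proof is correct, but it takes a genuinely different route from the paper's. The paper gives no self-contained argument for Corollary \ref{04}: it is presented as an application of Proposition \ref{03}, credited to Ben Nasr and Kthiri \cite{7}, which is exactly the Newton-polygon reading you sketch only as a closing remark (the hypothesis $|\lambda_n|=\max_{0\le k\le n}|\lambda_k|$ forces every slope of the upper Newton polygon to be nonnegative, hence every root to have absolute value $q^{-k}\le 1$). Your main argument is instead a direct ultrametric estimate: isolate $\lambda_n\omega^n$, bound the remaining sum by $\max_{0\le k\le n-1}|\lambda_k|\,|\omega|^k\le|\lambda_n|\,|\omega|^{n-1}$ under the assumption $|\omega|>1$, and divide. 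Every step is sound, since the absolute value extends multiplicatively and ultrametrically from $\mathbb{F}_q((X^{-1}))$ to its algebraic closure (the paper works inside $C_\infty$, where $|\cdot|$ is defined via $\upsilon_\infty$). What your approach buys: it is elementary, self-contained, and applies verbatim to non-monic $\Lambda$ --- a real point, since Proposition \ref{03} as stated in the paper is for $Y$-monic polynomials, so invoking it here tacitly requires a normalization or a more general form of the Newton-polygon theorem, which the paper leaves to the citation. What the paper's route buys: the factorization in Proposition \ref{03} yields the exact absolute value of every root, not merely the bound $\le 1$, and keeps the corollary inside the single framework used throughout the paper. One small caveat in your write-up: in the degenerate case $\lambda_n=0$ you conclude $\Lambda\equiv 0$ and say there is nothing to prove, but for the zero polynomial the conclusion is actually false (every element is a root); the honest resolution is that the corollary, like the setup preceding Proposition \ref{03}, tacitly assumes $\Lambda$ is a nonzero polynomial.
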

\begin{corollary}\label{05}
Let $n \geq 3$.
Let \begin{equation}
    \Lambda(X,Y)= Y^n+\lambda_{n-1}Y^{n-1}+\ldots+ \lambda_1Y+\lambda_0\in \mathbb{F}_q[X][Y]
 \end{equation}
with $\lambda_0 \neq 0$. 
Let us assume
$$\deg \lambda_{n-1}<
\displaystyle
\max_{0\leq k<n-2}\deg\lambda_k =\deg\lambda_{n-2} < 2
\deg \lambda_{n-1}.$$
Then, $\Lambda$ has only two roots 
$\omega_1, \omega_2 \in \mathbb{F}_q((X^{-1}))$  
satisfying $|\omega_1|> 1$ and $|\omega_2|>1$
and at least one conjugate
which lies on the unit circle.
\end{corollary}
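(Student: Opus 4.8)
The plan is to read off everything directly from the upper Newton polygon of $\Lambda$ via Weiss's Proposition~\ref{03}. Write $d_i := \deg\lambda_i$ and attach to $\Lambda$ the points $Q_i = (i, d_i)$, $0 \le i \le n$, with $d_n = 0$ since $\Lambda$ is monic. The hypotheses say exactly that $Q_{n-2}$ is the point of maximal height: it dominates $Q_{n-1}$ (because $d_{n-1} < d_{n-2}$), it dominates every $Q_k$ with $k < n-2$ (because $\max_{k<n-2} d_k = d_{n-2}$), and it dominates $Q_n$. A first routine check is that the hypotheses force $d_{n-2} \ge 1$ (if $d_{n-2}=0$ then $\lambda_{n-1}=0$, contradicting $d_{n-2} < 2 d_{n-1}$), and then $d_{n-1} > d_{n-2}/2 \ge 1/2$ gives $d_{n-1} \ge 1$; I would record these bounds up front, since they are what makes the two dominant absolute values strictly exceed $1$.

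Next I would pin down the two rightmost facets. The chord from $Q_{n-2}$ to $Q_n$ has height $d_{n-2}/2$ at $x = n-1$, and the hypothesis $d_{n-2} < 2 d_{n-1}$ says precisely that $Q_{n-1}$ lies strictly above this chord; hence $Q_{n-1}$ is a genuine vertex of the upper hull. The two rightmost facets are therefore $[Q_{n-2}, Q_{n-1}]$ and $[Q_{n-1}, Q_n]$, each of horizontal length $1$, with slopes $k_1 = d_{n-1} - d_{n-2}$ and $k_2 = -d_{n-1}$ satisfying $k_1 > k_2$ (again equivalent to $d_{n-2} < 2 d_{n-1}$), consistent with the decreasing-slope requirement. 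Applying Proposition~\ref{03} to these two length-one facets yields one root per facet, and because each factor $\Lambda_{k_j}(X,Y) = Y - \omega_j$ is of degree one it already lies in $\mathbb{F}_q((X^{-1}))[Y]$, so $\omega_1, \omega_2 \in \mathbb{F}_q((X^{-1}))$ with $|\omega_1| = q^{d_{n-2}-d_{n-1}} > 1$ and $|\omega_2| = q^{d_{n-1}} > 1$.

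To see that these are the only roots of absolute value $> 1$, I would argue that every remaining facet lies to the left of $Q_{n-2}$ and has nonnegative slope: the segment of the hull arriving at the global maximum $Q_{n-2}$ joins some $Q_m$ with $d_m \le d_{n-2}$ to $Q_{n-2}$, hence has slope $\ge 0$, and since slopes decrease from left to right all earlier facets have slope $\ge 0$ as well; by Proposition~\ref{03} the associated roots satisfy $|\alpha| = q^{-k} \le 1$. Finally, for the root on the unit circle I would use that by hypothesis some $Q_j$ with $j \le n-3$ also attains the maximal height $d_{n-2}$; by convexity the portion of the upper hull between the leftmost such maximal vertex and $Q_{n-2}$ is the horizontal segment at height $d_{n-2}$, a slope-$0$ facet of length $\ge 1$, which by Proposition~\ref{03} contributes at least one root with $|\alpha| = q^0 = 1$.

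The only genuinely delicate points are (a) justifying that $Q_{n-1}$ is a true vertex, so that the two dominant roots split into separate length-one facets: this is exactly where the strict inequality $d_{n-2} < 2 d_{n-1}$ is indispensable, since at equality the two would merge into a single length-two facet whose roots need not individually lie in $\mathbb{F}_q((X^{-1}))$; and (b) producing an actual slope-$0$ facet at the top, which is exactly where the equality $\max_{k<n-2} d_k = d_{n-2}$ is used. Everything else is convexity bookkeeping on the polygon together with the dictionary supplied by Proposition~\ref{03}.
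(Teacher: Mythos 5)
Your proof is correct and takes essentially the same route as the paper's: both arguments read the two rightmost length-one facets of the upper Newton polygon off the hypothesis $\deg\lambda_{n-1}<\deg\lambda_{n-2}<2\deg\lambda_{n-1}$, apply Weiss's Proposition (parts (i) and (ii)) to obtain the two dominant roots and their membership in $\mathbb{F}_q((X^{-1}))$, and use the horizontal facet forced by $\max_{0\le k<n-2}\deg\lambda_k=\deg\lambda_{n-2}$ to produce the unit-circle conjugate. If anything, your write-up is more complete than the paper's, since you explicitly check that $(n-1,\deg\lambda_{n-1})$ is a genuine vertex and that every facet to the left of $(n-2,\deg\lambda_{n-2})$ has nonnegative slope, so the remaining roots indeed have absolute value at most $1$.
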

\begin{proof}
First let us notice that the stated condition 
implies that $\deg \lambda_{n-1}>0$. 
Moreover the upper Newton polygon of $\Lambda$ 
contains the line
with a slope $k_1$ joining $(n-1,\deg\lambda_{n-1})$ 
and $(n,0)$, 
the line with a slope $k_2$ joining 
$(n-2,\deg\lambda_{n-2})$ and  
$(n-1,\deg\lambda_{n-1})$ and
the line with a slope $k_3 = 0$ 
joining 
$(n-2,\deg\lambda_{n-2})$ and  
$(n-k,\deg\lambda_{n-k}= \deg\lambda_{n-2})$
for some $0\leq k < n-2$.
We have: $\deg\lambda_{n-2} - \deg \lambda_{n-1}
< \deg \lambda_{n-1}$.
By Proposition \ref{03} $(i)$,
$\Lambda$ has exactly  two 
dominant roots $\omega_1$, $\omega_2 $
$$\left\{
  \begin{array}{ll}
    |\omega_1|= & \hbox{$q^{\deg\lambda_{n-1}}
    =q^{-k_1}>1$}, \\
    |\omega_2|= & \hbox{$q^{\deg\lambda_{n-2}-\deg\lambda_{n-1}}=q^{-k_2}>1$.}
  \end{array}
\right.$$
There exists 
$0 \leq k < n-2$ such that
$\deg\lambda_k = \deg\lambda_{n-2}$;
hence $\Lambda$ has one root, say $\omega_3 $,
such that
\begin{equation*}
     |\omega_3|= 
     q^{\frac{-\deg\lambda_{n-2}+\deg\lambda_k}{n-2-k}}
     = q^{-k_3} = 1.
\end{equation*}

By Proposition \ref{03} $(ii)$, 
$\Lambda$ admits
the two factors $\Lambda_{k_1}(X,Y)
= (Y-\omega_1)\in\mathbb{F}_q((X^{-1}))[Y]$
and 
$\Lambda_{k_2}(X,Y)
= (Y-\omega_2)\in\mathbb{F}_q((X^{-1}))[Y]$. 
Hence
  $\omega_1$ and $\omega_2\in\mathbb{F}_q((X^{-1}))$.
\end{proof}

\begin{theorem}\label{06}
Let $\Lambda$ be the polynomial of degree 
$n\geq3$ defined by
$$ \Lambda(Y)=
Y^n+\lambda_{n-1}Y^{n-1}+\lambda_{n-2}Y^{n-2}+\ldots+
\lambda_1Y+\lambda_0 \in \mathbb{F}_q[X][Y ]$$
with $\lambda_0 \neq 0$.
Then,
$\Lambda$ has exactly $2$ roots in  
$\overline{\mathbb{F}_{q}((X^{-1}))}$ which have 
an absolute value
strictly greater than $1$
and the remaining roots in  
$\overline{\mathbb{F}_{q}((X^{-1}))}$   
which have  an absolute 
value less or equal to $1$, with 
at least one conjugate
lying on the unit circle, if and only if 
the following conditions are satisfied:
$|\lambda_{n-1}|<|\lambda_{n-2}| 
= \displaystyle\max_{0 \leq i< n-2}|\lambda_i|$.
\end{theorem}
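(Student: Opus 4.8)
The plan is to translate the whole statement into the language of the upper Newton polygon and to use Weiss's Proposition \ref{03} as a dictionary between the valuations of the roots and the slopes of the facets. Writing $d_i := \deg\lambda_i$ and recording each nonzero monomial as the point $(i,d_i)$, the polynomial is monic, so the point $(n,0)$ is always present and is the rightmost vertex of the polygon; moreover the facet slopes are strictly decreasing from left to right. By Proposition \ref{03} a facet of slope $k$ contributes exactly as many roots (counted by its horizontal length) of absolute value $q^{-k}$. Hence a root has absolute value $>1$ precisely when it comes from a facet of negative slope, and a root lies on the unit circle precisely when it comes from a facet of slope $0$. Since negative slopes occur only in the rightmost facets, the claim becomes: the facets of negative slope have total horizontal length exactly $2$ (terminating at $(n,0)$) and there is at least one facet of slope $0$, if and only if $d_{n-1}<d_{n-2}=\max_{0\le i<n-2}d_i$.

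For the ``if'' direction, the hypothesis makes $(n-2,d_{n-2})$ the rightmost point at which the maximal height is attained, since $d_n=0<d_{n-2}$, $d_{n-1}<d_{n-2}$, and $d_i\le d_{n-2}$ for $i<n-2$; thus the polygon peaks at abscissa $n-2$ and descends from $(n-2,d_{n-2})$ to $(n,0)$ over horizontal length $2$. I would then check that every edge of this descending part has strictly negative slope: if $2d_{n-1}\le d_{n-2}$ (in particular if $\lambda_{n-1}=0$) the point $(n-1,d_{n-1})$ lies on or below the segment joining $(n-2,d_{n-2})$ and $(n,0)$, giving a single edge of slope $-d_{n-2}/2<0$; if $2d_{n-1}>d_{n-2}$ then $d_{n-1}>d_{n-2}/2>0$, so the two edges have slopes $d_{n-1}-d_{n-2}<0$ and $-d_{n-1}<0$. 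Either way the descending part yields exactly two roots of absolute value $>1$. Finally, since the maximum is also attained at some index $i_0<n-2$, the flat top joining $(i_0,d_{n-2})$ to $(n-2,d_{n-2})$ is an edge of slope $0$, producing at least one root on the unit circle, while every edge to the left of the peak has slope $\ge 0$, so the remaining roots have absolute value $\le 1$.

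For the ``only if'' direction, I would run the argument in reverse. The negative-slope facets have total horizontal length $2$ and terminate at $(n,0)$, so the polygon turns from non-negative to negative slope at abscissa $n-2$; that turning point is a genuine data vertex, whence $\lambda_{n-2}\neq 0$ and $d_{n-2}$ equals the global maximum of the heights, giving $d_{n-2}\ge d_{n-1}$ and $d_{n-2}\ge d_i$ for all $i<n-2$. Were $d_{n-1}=d_{n-2}$, the flat top would extend to abscissa $n-1$ and the descending part would have length only $1$, contradicting that there are exactly two dominant roots; hence $d_{n-1}<d_{n-2}$ (the case $\lambda_{n-1}=0$ being trivially compatible). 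The presence of a root on the unit circle forces a facet of slope $0$ of positive length; by the monotone decrease of the slopes such a facet must sit at the peak height, immediately to the left of the peak, joining some $(i_0,d_{n-2})$ with $i_0<n-2$ to $(n-2,d_{n-2})$. This yields $\max_{0\le i<n-2}d_i=d_{n-2}$, and assembling the three relations gives $|\lambda_{n-1}|<|\lambda_{n-2}|=\max_{0\le i<n-2}|\lambda_i|$.

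The main obstacle will be the elementary but delicate convex-geometry bookkeeping near the peak: one must ensure that the turning vertex falls exactly at abscissa $n-2$ (and not $n-1$), that both right-hand edges are strictly---not merely weakly---negatively sloped in every sub-case (in particular when $\lambda_{n-1}$ vanishes or is a nonzero constant), and that the slope-$0$ facet is pinned at the peak height rather than occurring lower down. Once these positional facts about the Newton polygon are secured, Proposition \ref{03} converts them directly into the stated root counts, and no further computation is required.
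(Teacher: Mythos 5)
Your argument is correct, and it takes a genuinely different route from the paper's in the necessity direction. The paper proves ``only if'' by symmetric functions and the ultrametric inequality: assuming $|\omega_1|\ge|\omega_2|>1\ge|\omega_3|\ge\cdots\ge|\omega_n|$ with some $|\omega_j|=1$, it bounds $|\lambda_{n-k}|\le|\omega_1\omega_2|=|\lambda_{n-2}|$ for every $k\neq 2$ and obtains the equality $|\lambda_{n-j}|=|\omega_1\omega_2\cdots\omega_j|=|\lambda_{n-2}|$ for the index $j$ of a unit-circle conjugate (implicitly the largest such $j$, so that $\omega_1\cdots\omega_j$ is the unique term of maximal absolute value in that symmetric sum), while the sufficiency direction is only asserted to ``easily follow'' from Proposition \ref{03}. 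You instead use the Newton polygon as an exact dictionary in both directions: the peak vertex at abscissa $n-2$, the length-two negative-slope tail ending at $(n,0)$, and the slope-zero facet pinned at the peak encode precisely the three coefficient conditions. Your route buys uniformity, and it actually supplies the convex-geometry details of the ``if'' direction that the paper leaves implicit; the paper's route buys a necessity proof with no polygon bookkeeping at all (no need to argue that the turning vertex is a data point, etc.). One caveat, which is really a defect of the theorem statement and afflicts the paper's proof equally: sufficiency silently requires $\deg\lambda_{n-2}\ge 1$. If all $\lambda_i$ are constants (e.g.\ $Y^3+Y+1$), the conditions $|\lambda_{n-1}|<|\lambda_{n-2}|=\max_{0\le i<n-2}|\lambda_i|$ hold, yet every root has absolute value $1$ and there is no dominant root at all; in your write-up this enters exactly at the unjustified step ``$d_n=0<d_{n-2}$''.
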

\begin{proof}
Let $\omega_1,\omega_2,\ldots, \omega_n$ 
be the roots of $\Lambda$.
The conditions are necessary.
Suppose  
$|\omega_1|\geq|\omega_2|>1
\geq|\omega_{3}|\geq\ldots\geq|\omega_n|$ 
and that there exists at least one $j$, 
$3\leq j \leq n$, such that
 $|\omega_{j}|=1$. 
We have $ |\lambda_{n-2}| > |\lambda_{n-1}|$.
 For  
 $k\in \{1,\ldots,n\}$, $k\neq2$, 
$$|\lambda_{n-k}|=
\Bigl|\displaystyle\sum_{1\leq i_1<i_2<\ldots <i_k\leq n}
\omega_{i_1}\omega_{i_2} \ldots\omega_{i_{k}}
\Bigr|
~\leq~
|\omega_{1}\omega_{2} \ldots\omega_{k}|
~\leq~
|\omega_1\omega_2|
=|\lambda_{n-2}|$$
and
$$|\lambda_{n-j}|
=
\Bigl|\sum_{1\leq i_1<i_2<\ldots <i_j\leq n}
\omega_{i_1}\omega_{i_2} \ldots\omega_{i_{j}}
\Bigr|
=
|\omega_{1}\omega_{2} \ldots\omega_{j}|
=
|\omega_1\omega_2|
=|\lambda_{n-2}|.$$
Then $$|\lambda_{n-2}|=\displaystyle\max_{i\neq n-2}|\lambda_i|.$$
The conditions are sufficient.
The converse easily follows from Proposition \ref{03}.
\end{proof}
\begin{example}\label{24}
\end{example}
Let
$$\Lambda(Y)=Y^3+(X+1)Y^2+(X^4+X^3)Y +X^4+X^3+X^2+X+1\in \mathbb{F}_2[X][Y].$$
By Theorem \ref{06},
$\Lambda(Y)$ has two roots  
$\omega_1$ and $\omega_2$ having 
absolute value  strictly greater than $1$ 
and one root $\omega_3$
which has an absolute value exactly equal to $1$. 
Using the facts that
\begin{itemize}
  \item $[ \omega_1+\omega_2 +\omega_3]=X+1,$
  \item  $[\omega_1\omega_2+\omega_1\omega_3+\omega_2\omega_3 ]=X^4+X^3,$
  \item $[\omega_1\omega_2\omega_3]=X^4+X^3+X^2+X+1$,
\end{itemize}
then $\omega_1, ~\omega_2$ and $\omega_3$ are defined by:
$$\left\{
  \begin{array}{ll}
    \omega_1= & \hbox{$X^2 +1+ \displaystyle\frac{1}{Z_1}$ ~~such that $|Z_1|>1$,} \\
    \omega_2= & \hbox{$X^2+X+\displaystyle\frac{1}{Z_2}$ ~~such that $|Z_2|>1$,} \\
  \end{array}
\right.$$
and $\omega_3=1+\displaystyle\frac{1}{Z_3}$ 
such that $|Z_3|>1$. For $j=1$, resp. $j=2$,
the fact that
$\Lambda(\omega_j)=0$ implies
that $Z_1$, resp.  $Z_2$,  is a root of 
the polynomial $H_1$, resp. $H_2$, defined by
\begin{equation}\label{26}
 H_1= Z^3+(X^3+1)Z^2+(X^2+X)Z+1,
\end{equation}
resp.
\begin{equation}\label{27}
 H_2= (X^2+X+1)Z^3+(X^3+X^2)Z^2+(X^2+1)Z+1.
\end{equation}
Applying Proposition \ref{03} 
to the equations \eqref{26} and \eqref{27}, 
we obtain  $Z_1,Z_2\in\mathbb{F}_{2}((X^{-1}))$.
Therefore $\omega_1,\omega_2\in\mathbb{F}_{2}((X^{-1}))$.
Since $\Lambda$ is monic and irreducible over 
$\mathbb{F}_2[X]$, 
we deduce that $(\omega_1,\omega_2)$ is  
a 2-Salem series and $\Lambda$ is the minimal 
polynomial of $\omega_1$.

\section{Criteria of existence of roots and conjugates in $\mathbb{F}_q((X^{-1}))$}
Before giving the proof of our results, 
we establish some lemmas that will be needed.
\begin{lemma} \label{07}
Let $n\geq3$. Let $\Lambda$ be defined by
$$\Lambda(Y)=Y^{n}+\lambda_{n-1}Y^{n-1}+\lambda_{n-2}Y^{n-2}
+\ldots+\lambda_{1}Y+\lambda_{0}\in\mathbb{F}_{q}[X][Y],$$
with $\lambda_0 \neq 0$.
Suppose  
$\displaystyle\max_{i\neq n-2}\deg\lambda_i=\deg\lambda_{n-2}\geq2\deg\lambda_{n-1}$.
If  $\deg(\lambda_{n-2})$ is odd, 
then $\Lambda$ has no root
in  $\mathbb{F}_q((X^{-1}))$ with absolute value $ > 1$.
\end{lemma}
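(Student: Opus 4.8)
The plan is to read the roots of absolute value $>1$ directly off the upper Newton polygon of $\Lambda$ via Proposition \ref{03}, and to show that the single facet responsible for them forces a half-integral degree when $\deg\lambda_{n-2}$ is odd. Write $m:=\deg\lambda_{n-2}$. By hypothesis $m=\max_{i\neq n-2}\deg\lambda_i$ is the maximal height occurring among the points $(i,\deg\lambda_i)$, and it is attained at $i=n-2$. Since $\Lambda$ is monic we have the point $(n,0)$, while $\deg\lambda_{n-1}\leq m/2<m$ because $m\geq 2\deg\lambda_{n-1}$. In particular every point of index $>n-2$ has height $<m$, so the rightmost vertex of maximal height of the upper polygon is exactly $(n-2,m)$.

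First I would isolate the part of the upper polygon carrying the dominant roots. By Proposition \ref{03}(i) a facet of slope $k$ produces roots of degree $-k$ and absolute value $q^{-k}$, so the condition $|\omega|>1$ singles out precisely the facets of negative slope, namely those lying to the right of the highest vertex $(n-2,m)$. Hence every root of absolute value $>1$ comes from the points $(n-2,m)$, $(n-1,\deg\lambda_{n-1})$, $(n,0)$. The segment joining $(n-2,m)$ to $(n,0)$ has slope $-m/2$ and takes the value $m/2$ at abscissa $n-1$; since $\deg\lambda_{n-1}\leq m/2$, the point $(n-1,\deg\lambda_{n-1})$ lies on or below this segment. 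Therefore the descending part of the upper polygon reduces to the single facet from $(n-2,m)$ to $(n,0)$, of slope $-m/2$ and horizontal length $2$.

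By Proposition \ref{03}(i) this facet yields exactly two roots $\omega_1,\omega_2$ with $|\omega_1|=|\omega_2|=q^{m/2}$, and these are the \emph{only} roots of $\Lambda$ of absolute value $>1$. Now I would invoke the parity hypothesis: every nonzero element of $\mathbb{F}_q((X^{-1}))$ has integer degree $\gamma(\cdot)\in\mathbb{Z}$ (it equals $-\upsilon_\infty(\cdot)$), whereas $\omega_1$ and $\omega_2$ have degree $m/2$, which is not an integer when $m$ is odd. Consequently neither $\omega_1$ nor $\omega_2$ can lie in $\mathbb{F}_q((X^{-1}))$, and $\Lambda$ has no root in $\mathbb{F}_q((X^{-1}))$ of absolute value $>1$, as claimed.

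The step that demands the most care is the reduction of the whole descending side of the polygon to a single negative-slope facet, i.e. ruling out any further vertex between $(n-2,m)$ and $(n,0)$. This is exactly where the quantitative assumption $\deg\lambda_{n-2}\geq 2\deg\lambda_{n-1}$ enters, through the inequality $\deg\lambda_{n-1}\leq m/2$; when $m$ is odd this inequality is automatically strict (as $\deg\lambda_{n-1}$ is an integer), so $(n-1,\deg\lambda_{n-1})$ lies strictly below the facet and cannot create an extra vertex. The remainder is a direct appeal to Weiss's Proposition \ref{03} together with the integrality of degrees in $\mathbb{F}_q((X^{-1}))$.
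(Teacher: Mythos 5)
Your proof is correct and follows essentially the same route as the paper: both identify the facet of the upper Newton polygon joining $(n-2,\deg\lambda_{n-2})$ to $(n,0)$, apply Proposition \ref{03}(i) to get exactly two roots of degree $\deg\lambda_{n-2}/2$, and conclude from the fact that elements of $\mathbb{F}_q((X^{-1}))$ have integer degree. The only (harmless) difference is that the paper first cites Theorem \ref{06} to locate the two dominant roots, whereas you extract that fact directly from the convexity of the polygon, which makes your argument slightly more self-contained.
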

\begin{proof}
 By Theorem \ref{06}, $\Lambda$ has two roots $\omega_1$ and $\omega_2$ such that  $|\omega_1| > 1$ and $|\omega_2|>1$. The
remaining roots $\omega_3,\ldots,\omega_n$ have an absolute value less or equal to $1$ and at least one
conjugate 
$\omega_j$
lies on the unit circle for $3\leq j\leq n$.
As $\deg\lambda_{n-2}\geq2\deg\lambda_{n-1}$,
then  the upper Newton polygon of $\Lambda$ 
contains the line
connecting the points $(n-2,\deg\lambda_{n-2})$ 
and $(n,0)$. 
The slope of this line is 
$k=-\displaystyle\frac{\deg\lambda_{n-2}}{2}$.
By Proposition \ref{03} $(i)$,  
$\Lambda$ has  $n-(n-2)=2$ roots  $\omega_1$ and 
$\omega_2$
having the absolute value 
$ q^{-k}> 1$.  
Since they have the same  absolute value 
$ q^{-k}$, we would have
\begin{equation}
    \deg \omega_1= \deg \omega_2 =-k=\displaystyle\frac{\deg\lambda_{n-2}}{2}\notin \mathbb{Z}.
\end{equation}
Therefore $\omega_1,\omega_2\notin\mathbb{F}_q((X^{-1}))$.
\end{proof}

\begin{lemma}\label{1211}
Let $q\neq 2^r$ for any $r \geq 1$
and $n \geq 3$.
Let $\Lambda$ be the polynomial  defined by
\begin{equation*}
    \Lambda(Y)=Y^{n}+\lambda_{n-1}Y^{n-1}+\lambda_{n-2}Y^{n-2}+\ldots+\lambda_{1}Y+\lambda_{0}
    \in\mathbb{F}_{q}[X][Y]
\end{equation*}
with $\lambda_0 \neq 0$.
Suppose  
$\deg\lambda_{n-2}\geq\displaystyle\max_{i\neq n-2}\deg(\lambda_i)\quad 
{\rm and}\quad \deg\lambda_{n-2}>2\deg\lambda_{n-1}.$
Let $\omega_1$ be a root of $\Lambda$ 
such that $|\omega_1|>1$.
If $\displaystyle\deg\lambda_{n-3}
=
\deg\lambda_{n-2}$, 
then  $\omega_1\in \overline{\mathbb{F}_{q}((X^{-1}))}
\setminus\mathbb{F}_q((X^{-1}))$.
\end{lemma}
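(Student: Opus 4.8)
The plan is to locate the two dominant roots via the upper Newton polygon, reduce the membership question to the splitting of a single quadratic factor over $\mathbb{F}_q((X^{-1}))$, and then decide that splitting by a square-class computation on the discriminant.

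First I would read off the Newton polygon. Since $\deg\lambda_{n-2}=\max_{i\neq n-2}\deg\lambda_i$ is the largest vertical coordinate, the vertex $(n-2,\deg\lambda_{n-2})$ is a corner of the upper hull, and the hypothesis $\deg\lambda_{n-2}>2\deg\lambda_{n-1}$ places $(n-1,\deg\lambda_{n-1})$ strictly below the segment joining $(n-2,\deg\lambda_{n-2})$ to $(n,0)$. Hence that segment is a facet of slope $-\tfrac{1}{2}\deg\lambda_{n-2}$ and horizontal length $2$, so by Proposition \ref{03}(i) the polynomial $\Lambda$ has exactly two roots of absolute value $>1$, namely $\omega_1,\omega_2$, and both have the common degree $\deg\omega_1=\deg\omega_2=\tfrac12\deg\lambda_{n-2}$. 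If $\deg\lambda_{n-2}$ is odd this common degree is not an integer, so neither root lies in $\mathbb{F}_q((X^{-1}))$; this is precisely Lemma \ref{07}, and the assertion follows at once. The genuine case is $\deg\lambda_{n-2}=2s$ even, which I treat next.

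In the even case Proposition \ref{03}(ii) yields the quadratic factor $\Lambda_{-s}(Y)=(Y-\omega_1)(Y-\omega_2)=Y^2+p_1Y+p_0\in\mathbb{F}_q((X^{-1}))[Y]$, with $p_1=-(\omega_1+\omega_2)$ and $p_0=\omega_1\omega_2$. Because $q$ is odd, $\omega_1\in\mathbb{F}_q((X^{-1}))$ if and only if $\Lambda_{-s}$ splits there, i.e. if and only if the discriminant $D=p_1^2-4p_0$ is a square. I would use the standard description of squares in $\mathbb{F}_q((X^{-1}))$ for odd $q$: writing any nonzero $u$ as $u=cX^{m}(1+\eta)$ with $c\in\mathbb{F}_q^{*}$, $m=\deg u$ and $|\eta|<1$, the principal unit $1+\eta$ is always a square (one solves $2c_k=\cdots$ inductively, using $q\neq 2^r$), so $u$ is a square exactly when $m$ is even and $c$ is a square in $\mathbb{F}_q^{*}$. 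Thus everything reduces to the degree and leading coefficient of $D$.

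To compute these I would compare coefficients in $\Lambda=\Lambda_{-s}\cdot G$, where $G=\prod_{i\geq 3}(Y-\omega_i)$ is monic of degree $n-2$ with all coefficients of degree $\le 0$. The coefficient of $Y^{n-1}$ gives $\lambda_{n-1}=g_{n-3}+p_1$ with $\deg g_{n-3}\le 0$, and since $\deg\lambda_{n-1}<s$ this forces $\deg p_1<s$; equivalently the degree-$s$ leading coefficients of $\omega_1,\omega_2$ are opposite, say $\mu$ and $-\mu$, so that the leading coefficient of $p_0$ is $-\mu^2$. Reading the coefficient of $Y^{n-2}$ shows this equals $\alpha_{2s}$, whence $\mu^2=-\alpha_{2s}$ and $D=p_1^2-4p_0$ has degree $2s$ with leading coefficient $-4\alpha_{2s}=4\mu^2$. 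Consequently $D$ is a square in $\mathbb{F}_q((X^{-1}))$ precisely when $-\alpha_{2s}$ (equivalently $\mu$) is a square in $\mathbb{F}_q$.

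The main obstacle is therefore the final implication: to deduce from $\deg\lambda_{n-3}=\deg\lambda_{n-2}$ that $-\alpha_{2s}$ is a \emph{non}-square in $\mathbb{F}_q$, i.e. that $\mu\notin\mathbb{F}_q$, so that $D$ fails to be a square and $\omega_1\notin\mathbb{F}_q((X^{-1}))$. Interpreting $\lambda_{n-3}=(-1)^3e_3(\omega_1,\dots,\omega_n)$ and isolating its degree-$2s$ part, the condition $\deg\lambda_{n-3}=2s$ is equivalent to $\deg\!\bigl(\omega_3+\cdots+\omega_n\bigr)=0$, that is, the sum of the small roots has absolute value exactly $1$. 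I expect the crux of the proof to be showing that this normalization of the small roots is incompatible with $\mu\in\mathbb{F}_q$; extracting such a square-class obstruction at the level of the leading coefficient from a degree equality on $\lambda_{n-3}$ is the delicate point, and it is exactly here that the hypothesis $q\neq 2^r$ must be used in an essential way (through the factor $4=2^2$ and the solvability of the linear recursions defining square roots of principal units).
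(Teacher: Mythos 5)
Your proposal stops exactly at the step that carries the lemma's content, and that step is not just ``delicate'': it cannot be carried out, because the implication you would need is false. Everything before it is correct, and in fact your computation establishes a clean criterion: under the stated degree hypotheses, with $\deg\lambda_{n-2}=2s$ even, a dominant root satisfies $\omega_1\in\mathbb{F}_q((X^{-1}))$ \emph{if and only if} $-\alpha_{2s}$ is a square in $\mathbb{F}_q^{*}$. This criterion makes no reference to $\lambda_{n-3}$, and the hypothesis $\deg\lambda_{n-3}=\deg\lambda_{n-2}$ is independent of it: as you yourself observed, writing $\lambda_{n-3}=-e_3(\omega_1,\dots,\omega_n)$, its $X^{2s}$-coefficient equals $-(\mu)(-\mu)c=\mu^{2}c$, where $c$ is the sum of the constant terms of the unit-modulus roots, so $\deg\lambda_{n-3}=2s$ says only that $c\neq0$ and puts no constraint whatsoever on the square class of $-\alpha_{2s}=\mu^{2}$. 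Consequently Lemma \ref{1211} itself is false as stated. A counterexample with $q=3$, $n=3$: $\Lambda(Y)=(Y-X)(Y+X-1)(Y-1)=Y^{3}+Y^{2}+(2X^{2}+X+1)Y+(X^{2}+2X)$ satisfies every hypothesis ($\deg\lambda_{1}=\deg\lambda_{0}=2\ge\max_{i\neq1}\deg\lambda_{i}$, $\deg\lambda_{1}>2\deg\lambda_{2}=0$, $\lambda_0\neq0$), yet $X\in\mathbb{F}_3((X^{-1}))$ is a root of absolute value $3>1$. Irreducibility does not rescue the statement: $\Lambda(Y)=Y^{3}+Y^{2}+2X^{2}Y+(X^{2}+X+1)$ is irreducible over $\mathbb{F}_3[X]$ (a root in $\mathbb{F}_3(X)$ would lie in $\mathbb{F}_3[X]$ and have degree $\le1$ in $X$, and neither the linear nor the constant case admits a solution), it satisfies all the hypotheses, and since $\Lambda(X+2)=2X+1$ while $\Lambda'(X+2)=2X^{2}+2X+1$, Hensel's lemma in the complete field $\mathbb{F}_3((X^{-1}))$ produces a root $\omega_1$ with $|\omega_1-(X+2)|\le q^{-1}<1$; thus $\omega_1\in\mathbb{F}_3((X^{-1}))$, $|\omega_1|=q>1$, in agreement with your criterion because $-\alpha_{2s}=-2=1$ is a square in $\mathbb{F}_3$.

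For comparison: your route (quadratic factor from Proposition \ref{03}(ii) plus a discriminant square-class test) is genuinely different from the paper's, which substitutes $\omega_1=[\omega_1]+1/Z_1$, builds $H(Z)=A_nZ^{n}+\cdots+1$, and tries to force $|Z_1|\le1$ via Corollary \ref{04}; but the paper's proof breaks at precisely the corresponding point, and for the same structural reason, so the missing step was not available to you there either. The paper claims $\deg A_n=(n-1)s$, whereas $A_n=\Lambda([\omega_1])=\prod_{i=1}^{n}([\omega_1]-\omega_i)$ contains the factor $[\omega_1]-\omega_1=-1/Z_1$ of absolute value $<1$, so in fact $\deg A_n=(n-1)s-\deg Z_1<(n-1)s$. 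In the paper's own expansion $A_n=-[\omega_3][\omega_1]^{n-1}+Q[\omega_1]^{n-2}+\lambda_{n-3}[\omega_1]^{n-3}+\cdots+\lambda_0$, the hypothesis $\deg\lambda_{n-3}=2s$ makes $\lambda_{n-3}[\omega_1]^{n-3}$ of degree $(n-1)s$ too, with leading coefficient $(a_s^{2}[\omega_3])a_s^{n-3}=[\omega_3]a_s^{n-1}$ cancelling identically the leading coefficient $-[\omega_3]a_s^{n-1}$ of the first term --- the very symmetric-function identity that blocks your final step. So the honest conclusion of your analysis is that the lemma should be replaced by your square-class criterion on $-\alpha_{2s}$ (the condition actually appearing in Theorem \ref{1128}(i)); no degree condition on $\lambda_{n-3}$ can force non-membership, and this defect propagates to the necessity claims that the paper derives from Lemma \ref{1211} in Theorem \ref{19bis}.
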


\begin{proof}
By Theorem \ref{06} the polynomial
$\Lambda$ has exactly two roots
$\omega_1$, $\omega_2$, such that
$|\omega_1|> 1, |\omega_2|> 1$, and at least one, 
$\omega_3$, such that
$|\omega_3|=1$.
According to Lemma \ref{07}, we conclude that 
$\deg\lambda_{n-2}$ is even. Set $\deg\lambda_{n-2}=2s
> 0$, then $\deg \omega_1= \deg \omega_2=s$. 
Let us assume $\omega_1 \in 
\mathbb{F}_q((X^{-1}))$.
Consider
\begin{equation}
\label{omega1omega2}
    \omega_1=\displaystyle\sum_{i=0}^s a_{i}X^{i}+\frac{1}{Z_1}\, , \qquad \,{\rm resp.}
    \qquad
    \omega_2=\displaystyle\sum_{i=0}^s b_{i}X^{i}+\frac{1}{Z_2}
\end{equation}
such that $a_s\neq0, b_s \neq 0$ 
and $|Z_1|>1, |Z_2|>1$. Let $\lambda_n=1$,
\begin{equation*}
    \lambda_i= \displaystyle\sum_{k_i=0}^{m_i}\alpha_{(k_i,i)} X^{k_i}
\end{equation*}
with $m_i\leq 2 s$ 
for $i=0, \ldots , n-4$,
$m_{n-3} = 2 s$,   and
\begin{equation*}
    \lambda_{n-2}=\displaystyle\sum_{j=0}^{2s}\alpha_{(j,n-2)}X^{j}
\end{equation*}
such that $\alpha_{(2s,n-2)}\neq0$.
We now prove that 
necessarily $|Z_1| \leq 1$, 
in contradiction
with $|Z_1| > 1$.

Indeed, the identity
$\Lambda (\omega_1)=0$ implies $0 =$
\begin{equation*}
\Bigl([\omega_1]+\frac{1}{Z_1}\Bigr)^n
+
\lambda_{n-1}
\Bigl([\omega_1]+\frac{1}{Z_1}
\Bigr)^{n-1}
+
\lambda_{n-2}
\Bigl([\omega_1]+\frac{1}{Z_1}
\Bigr)^{n-2} 
+
\ldots+
\lambda_{1}
\Bigl([\omega_1]+\frac{1}{Z_1}
\Bigr)
+
\lambda_{0}.
\end{equation*}
Multiplying it by $Z_1^n$, we  obtain
$$Z_1^n
\Bigl(\sum_{k=0}^n\lambda_{k}[\omega_1]^k
\Bigr)+ 
Z_1^{n-1}
\Bigl(\sum_{k=1}^n k\lambda_{k}[\omega_1]^{k-1}
\Bigr)
+Z_1^{n-2}
\Bigl(\sum_{k=2}^n\frac{k(k-1)}{2}\lambda_{k}
[\omega_1]^{k-2}
\Bigr)$$
$$+\ldots +Z_1^{n-j}
\Bigl(\sum_{k=j}^{n}
\frac{k (k-1)\ldots(k-j+1)}{j!}\lambda_{k}
[\omega_1]^{k-j}
\Bigr)
+\ldots+1=0.$$
Whence $Z_1$ is the root of 
the polynomial $H$  defined by
\begin{equation*}
H(Z)=A_n Z^n+A_{n-1}Z^{n-1}+ \ldots+1\in\mathbb{F}_{q}[X][Z]
\end{equation*}
where
\begin{align}\label{141}
A_i=\displaystyle\sum_{k=0}^i\binom{n-k}{i-k}\lambda_{n-k}[\omega_1]^{i-k},\quad 0\leq i\leq n.
\end{align}
Moreover
\begin{equation}\label{15}
   -\lambda_{n-1} = [\omega_1]+[\omega_2]
   + [\omega_3]
\end{equation}
and
\begin{align}\label{16}
   \lambda_{n-2}&=\omega_1\omega_2+\omega_1\omega_3+\ldots+\omega_{n-1}\omega_n\\
   &= [\omega_1][\omega_2]+Q
\end{align}
 with $Q \in \mathbb{F}_{q}[X]$ and $\deg Q \leq s-1$.
Notice that 
$\displaystyle\deg\lambda_{n-2} > 2 \deg\lambda_{n-1}$ implies
\begin{equation}\label{13}
    \deg \lambda_{n-1} = \deg([\omega_1]+[\omega_2])<s.
\end{equation}
then $a_s + b_s = 0$. 
Hence
$[\omega_1] - [\omega_2] = 
2 a_s X^s 
+ (a_{s-1} - b_{s-1} )X^{s-1} +
\ldots + ( a_0 - b_0 )$.
Since $q \neq 2^r$, for any $r \geq 1$, then 
$\deg([\omega_1] - [\omega_2]) = s$.
It follows from ~\eqref{15} and ~\eqref{16} 
that, for $0 \leq i \leq n$,
$0 \leq k \leq i$,
$$\left\{
  \begin{array}{ll}
    \deg(\lambda_{n-k}[\omega_1]^{i-k})=is & \hbox{$\quad {\rm for}~~ k=0,2$}, \\
   \deg(\lambda_{n-k}[\omega_1]^{i-k})< is & \hbox{$\quad {\rm for}~~ k\neq0,2$.}
  \end{array}
\right.
$$
Then
\begin{equation*}
    \deg A_i \leq is, \quad0\leq i\leq n.
\end{equation*}
In view of \eqref{141}, 
\eqref{15} and \eqref{16},
we can write
\begin{align*}
  A_n &=  [\omega_1]^{n}+\lambda_{n-1}[\omega_1]^{n-1}+
  \lambda_{n-2}[\omega_1]^{n-2}+
  \ldots+\lambda_0 \\
   &= 
-[\omega_3][\omega_1]^{n-1}+   
   [\omega_1]^{n-2}Q+\lambda_{n-3}[\omega_1]^{n-3}+\ldots+\lambda_0.
\end{align*}
Thus
\begin{equation*}
    \deg A_n=(n-1)s.
\end{equation*}
Again, by \eqref{141}, it is easy to show 
\begin{equation*}
  \deg A_i\leq (n-1)s, \quad~{\rm for}~ 0\leq i \leq n-1.
\end{equation*}
As a result, by applying 
Corollary \ref{04}, we obtain $|Z_1|\leq1$,
a contradiction.
\end{proof}
\section{ Proof of Theorem \ref{1128}}
For establishing the proof of Theorem \ref{1128}
the cases
$n =3$ and $n \geq 4$
are dissociated.
Proposition
\ref{19}
and Theorem \ref{19bis}, 
interesting in their own rights, 
play an important role in the 
characterization of the 2-Salem elements.

\begin{proposition}
\label{19}
Let $\Lambda$ be
the polynomial defined by
\begin{equation}
\label{22}
    \Lambda(Y)=Y^{3}+\lambda_{2}Y^{2}+\lambda_{1}Y
    +\lambda_{0}\in\mathbb{F}_{q}[X][Y]
\end{equation}
where  $2 \deg\lambda_{2}
<\displaystyle\deg\lambda_{1}=
\deg\lambda_0$. 
Suppose  $q\neq 2^r$ for any $r \geq 1$.
Let $\omega_1$ be a root of $\Lambda$ such that 
$|\omega_1|>1$. 
Then
  $\omega_1\in \mathbb{F}_q((X^{-1}))$ 
  if and only if $[\omega_1]\in \mathbb{F}_{q}[X]$
  and 
  $\deg\lambda_{1}$ is even
  ($\neq 0$).
\end{proposition}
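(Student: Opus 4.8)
The plan is to read the degrees of the dominant roots off the Newton polygon of $\Lambda$, dispose of the easy implication, and treat the converse by the substitution $\omega_1 = [\omega_1] + \eta$. Set $d := \deg\lambda_1 = \deg\lambda_0$ and $e := \deg\lambda_2$; since $2e < d$, the point $(2,e)$ lies strictly below the segment joining $(1,d)$ to $(3,0)$, so the upper Newton polygon of $\Lambda$ consists of the slope-$0$ facet joining $(0,d)$ and $(1,d)$ and the slope-$(-d/2)$ facet joining $(1,d)$ and $(3,0)$. By Proposition \ref{03} the second facet (length $2$) produces the two dominant roots $\omega_1,\omega_2$, each of degree $s := d/2$, while the first produces $\omega_3$ with $|\omega_3| = 1$ and $\deg\omega_3 = 0$. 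From $\omega_1+\omega_2+\omega_3 = -\lambda_2$ together with $\deg\lambda_2 = e < s$, the coefficients of $X^s$ must cancel, so the leading coefficients $a_s,b_s$ of $[\omega_1],[\omega_2]$ satisfy $a_s + b_s = 0$. The forward implication is then immediate: if $\omega_1 \in \mathbb{F}_q((X^{-1}))$ its polynomial part $[\omega_1]$ lies in $\mathbb{F}_q[X]$ by definition, while $\deg\omega_1 = s$ must be an integer; as $|\omega_1|>1$ forces $s > 0$, the degree $\deg\lambda_1 = 2s$ is even and nonzero.

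For the converse I would assume $P := [\omega_1] \in \mathbb{F}_q[X]$, interpreted as the existence of $P \in \mathbb{F}_q[X]$ with $\deg P = s$ (an integer, because $\deg\lambda_1$ is even) and $|\omega_1 - P| < 1$. Put $\eta := \omega_1 - P$; if $\eta = 0$ then $\omega_1 = P \in \mathbb{F}_q((X^{-1}))$ and there is nothing to prove, so assume $\eta \neq 0$, whence $0 < |\eta| < 1$. Since $P \in \mathbb{F}_q[X]$, the translate
$$\widetilde{\Lambda}(Y) := \Lambda(P+Y) = Y^3 + A_1 Y^2 + A_2 Y + A_3 \in \mathbb{F}_q[X][Y]$$
is monic with coefficients in $\mathbb{F}_q[X]$, and its three roots are $\eta = \omega_1 - P$, $\omega_2 - P$ and $\omega_3 - P$; in particular $\eta$ is the unique root of $\widetilde\Lambda$ of negative degree.

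The key step is to bound $\deg A_2$ and $\deg A_3$. Here $\deg(\omega_2 - P) = s$ because its leading coefficient is $b_s - a_s = -2a_s$, which is nonzero exactly because $q \neq 2^r$; hence $|\omega_2 - P| = q^s$, and likewise $|\omega_3 - P| = q^s$ since $\deg\omega_3 = 0 < s$. Expanding the symmetric functions, $A_2 = \sum_{i<j}(\omega_i - P)(\omega_j - P)$ has dominant term $(\omega_2-P)(\omega_3-P)$, so $\deg A_2 = 2s$, while $A_3 = \prod_i(P - \omega_i)$ has $|A_3| = |\eta|\,q^{2s} < q^{2s}$, so $\deg A_3 < 2s$; also $\deg A_1 \leq s < 2s$. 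Feeding this back into the Newton polygon of $\widetilde\Lambda$, the vertex $(1,2s)$ is the apex and the leftmost facet joins $(0,\deg A_3)$ to $(1,2s)$, of length $1$ and positive slope $2s - \deg A_3$. By Proposition \ref{03} this facet corresponds to a single root of $\widetilde\Lambda$ of negative degree whose monic linear factor lies in $\mathbb{F}_q((X^{-1}))[Y]$; that root is exactly $\eta$. Hence $\eta \in \mathbb{F}_q((X^{-1}))$ and $\omega_1 = P + \eta \in \mathbb{F}_q((X^{-1}))$.

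The hard part is precisely this degree bookkeeping: the whole argument pivots on $\deg(\omega_2 - [\omega_1]) = s$, i.e. on the non-cancellation $b_s - a_s = -2a_s \neq 0$, which is where the hypothesis $q \neq 2^r$ is indispensable — in characteristic $2$ this difference vanishes, $(1,2s)$ need no longer be the apex, and the length-$1$ facet that isolates $\eta$ can disappear. A secondary point requiring care is making precise the hypothesis ``$[\omega_1] \in \mathbb{F}_q[X]$'' for a root $\omega_1$ that a priori lives in $\overline{\mathbb{F}_q((X^{-1}))}$, which I would phrase as the existence of the polynomial approximation $P$ used above.
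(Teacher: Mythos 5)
Your proof is correct and is essentially the paper's own argument: both directions run through the same steps — the Newton polygon of $\Lambda$ giving the two dominant roots of degree $d/2$ (whence evenness for necessity), and, for sufficiency, translation by $P=[\omega_1]$ followed by an application of Proposition \ref{03} to the translated polynomial, with $q\neq 2^{r}$ used exactly where you use it, to ensure $\deg(\omega_2-P)=s$ and hence that the coefficient of degree $2s$ dominates. The only cosmetic differences are that the paper works with the reciprocal polynomial $H(Z)=Z^{3}\Lambda(P+1/Z)$ (so your apex $(1,2s)$ appears there as the dominant coefficient $A_2$ of $H$, and your $\eta$ is the paper's $1/Z_1$), and that you bound the coefficients by ultrametric estimates on the factors $\omega_i-P$ rather than by the paper's symmetric-function expansions in the polynomial parts — a slightly cleaner bookkeeping, but the same proof.
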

\begin{proof}
The condition is necessary.
Indeed,
from
Theorem \ref{06},
the root $\omega_1$
belongs to $\overline{\mathbb{F}_q((X^{-1}))}$.
Imposing $\omega_1 \in
\mathbb{F}_q((X^{-1}))$
implies $[\omega_1]\in \mathbb{F}_{q}[X]$, and,
from Lemma \ref{07}, 
$\deg \lambda_{1}$ is even.
For sufficiency,
we consider that the decomposition 
$\omega_1 = [\omega_1] +1/Z_1 ,$
with $|Z_1| > 1$, holds, and
we keep the same notations
for $[\omega_1] $
as in \eqref{omega1omega2}.
Then the steps of the proof are those
of the proof of Lemma \ref{1211}
until the equality \eqref{13}.

In view of \eqref{141}, with
$\deg \lambda_1 = 2s > 0$,
we can write
\begin{align*}
  A_3 &=  [\omega_1]^{3}+\lambda_{2}[\omega_1]^{2}
  +\lambda_{1}[\omega_1]+\lambda_0 \\
     &= [\omega_1]^{3}-( [\omega_1]+[\omega_2]+[\omega_3])[\omega_1]^{2}+([\omega_1][\omega_2]+[\omega_1][\omega_3]+[\omega_2][\omega_3]+Q)[\omega_1]\\
&  \quad   -[\omega_1][\omega_2][\omega_3]+Q' \\
    & = Q"
\end{align*}
where $\deg Q \leq s-1$,
and $Q'$ and $Q"$ are two polynomials with degree 
less than or equal to $2s-1$.
Thus
\begin{equation*}
    \deg A_3\leq 2s-1.
\end{equation*}
Notice that $\displaystyle\deg\lambda_{1}> 2 \deg\lambda_{2}$ implies
\begin{equation}\label{13bis}
    \deg \lambda_{2} = \deg([\omega_1]+[\omega_2]+[\omega_3])<s.
\end{equation}
then $a_s + b_s = 0$. Hence
$$[\omega_1] - [\omega_2] = 2a_sX^s + (a_{s-1} - b_{s-1})X^{s-1} +\ldots+ (a_0 - b_0).$$
Since   $q\neq 2^r$, then $ \deg([\omega_1] - [\omega_2]) = s$.
Since
$$A_2 = 3 [\omega_1]^2
+ 2 \lambda_2 [\omega_1]+
\lambda_1
=
([\omega_1] - [\omega_2]+
[\omega_3]) [\omega_1] + 
[\omega_2][\omega_3] + Q$$
we have
$$\deg A_{2}=\deg([\omega_1]-[\omega_2])+s=2s .$$
We have
$$\deg A_{1}=s.$$
Notice that $A_3\neq0$; if not, by Corollary \ref{04}, we 
would have $|Z_1|\leq1$, a contradiction.
We conclude that
$$
   \deg A_{2}>\displaystyle\max_{i\neq 2}\deg A_i.
$$
Finally, by Proposition \ref{03},
 the only root of $H$ with an absolute value 
 $>1$ is $Z_1$ and  
$H$ admits  
the factor $(Z-Z_1)\in\mathbb{F}_q((X^{-1}))[Z]$.
Then $Z_1\in\mathbb{F}_{q}((X^{-1}))$ and 
$\omega_1=[\omega_1]+\displaystyle\frac{1}{Z_1} 
\in \mathbb{F}_{q}((X^{-1}))$, 
completing the proof.
\end{proof}

\begin{theorem}
\label{19bis}
Let $n\geq4$ and suppose $q \neq 2^r$ for any $r \geq 1$. 
Let $\Lambda$ be the polynomial   
\begin{equation}\label{22_}
    \Lambda(Y):=Y^{n}+\lambda_{n-1}Y^{n-1}+
    \lambda_{n-2}Y^{n-2}+\ldots+\lambda_{1}Y+\lambda_{0}
    \quad\in\mathbb{F}_{q}[X][Y]
\end{equation}
with $\lambda_0 \neq 0$.
Suppose  
$
\displaystyle\deg\lambda_{n-2}
=\max_{i\neq n-2}\deg(\lambda_i)
~~ {\rm and}
~~
 \deg\lambda_{n-2}>2\deg\lambda_{n-1}.$ 
Let $\omega_1$ be a root of $\Lambda$ 
such that $|\omega_1|>1$. Then
  $\omega_1\in \mathbb{F}_q((X^{-1}))$ 
  if and only if 
  $[\omega_1]\in \mathbb{F}_{q}[X]$,  $\deg\lambda_{n-2}$ is even ($\neq 0$)
  and
 $\deg\lambda_{n-3}<\deg\lambda_{n-2}$.
\end{theorem}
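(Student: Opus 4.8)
The plan is to establish the two implications separately, adapting to arbitrary $n\geq4$ the template used for the case $n=3$ in Proposition~\ref{19}; the whole matter reduces, as there, to locating the upper Newton polygon of the auxiliary one–variable polynomial $H$ of which $Z_1:=1/\{\omega_1\}$ is a root, the novelty being that the relevant vertex now sits at the index $n-1$.

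For necessity, assume $\omega_1\in\mathbb{F}_q((X^{-1}))$. Then its principal part lies in $\mathbb{F}_q[X]$, so $[\omega_1]\in\mathbb{F}_q[X]$. The hypothesis $\deg\lambda_{n-2}>2\deg\lambda_{n-1}$ puts $\Lambda$ under Lemma~\ref{07}: were $\deg\lambda_{n-2}$ odd, no root of absolute value $>1$ could lie in $\mathbb{F}_q((X^{-1}))$, contradicting $|\omega_1|>1$; hence $\deg\lambda_{n-2}$ is even, and it is nonzero since it exceeds $2\deg\lambda_{n-1}\geq0$. Finally the maximality hypothesis forces $\deg\lambda_{n-3}\leq\deg\lambda_{n-2}$, and equality is excluded, for under it Lemma~\ref{1211} would place $\omega_1$ in $\overline{\mathbb{F}_q((X^{-1}))}\setminus\mathbb{F}_q((X^{-1}))$; thus $\deg\lambda_{n-3}<\deg\lambda_{n-2}$.

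For sufficiency I would put $\deg\lambda_{n-2}=2s$ and, exactly as at the start of the proof of Lemma~\ref{1211}, write $\omega_1=[\omega_1]+1/Z_1$ with $[\omega_1]=\sum_{i=0}^{s}a_iX^{i}\in\mathbb{F}_q[X]$, $a_s\neq0$, $|Z_1|>1$, so that $Z_1$ is a root of $H(Z)=A_nZ^{n}+\cdots+A_1Z+1$ with the $A_i$ given by \eqref{141}. Running the same computation as there up to \eqref{13}, one records $a_s+b_s=0$, hence $b_s=-a_s$ and, because $q\neq2^{r}$, $\deg([\omega_1]-[\omega_2])=s$; relation \eqref{16} then gives the normalisation $\alpha_{2s}=a_sb_s=-a_s^{2}$ for the leading coefficient of $\lambda_{n-2}$. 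A term-by-term look at \eqref{141} yields $\deg A_i\leq is$ for all $i$, the only two monomials attaining degree $is$ being those indexed by $k=0$ and $k=2$.

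The decisive point—and the step I expect to be the main obstacle—is to pin down the top of the polygon: that $\deg A_{n-1}=(n-1)s$ exactly while $\deg A_n<(n-1)s$. For $i=n-1$ the two leading monomials of $A_{n-1}$ carry the coefficients $\binom{n}{n-1}a_s^{\,n-1}=n\,a_s^{\,n-1}$ and $\binom{n-2}{n-3}\alpha_{2s}a_s^{\,n-3}=-(n-2)\,a_s^{\,n-1}$, whose sum is $2\,a_s^{\,n-1}$; this is nonzero precisely because $q\neq2^{r}$, which is exactly where that hypothesis is indispensable, so $\deg A_{n-1}=(n-1)s$. For $i=n$ one uses $A_n=\Lambda([\omega_1])=\prod_{j=1}^{n}([\omega_1]-\omega_j)$: the factor $[\omega_1]-\omega_1=-\{\omega_1\}$ has negative degree while the remaining $n-1$ factors have degree $s$ (for $j=2$ again by $b_s=-a_s\neq a_s$), so $\deg A_n<(n-1)s$—the exact analogue of the estimate $\deg A_3\leq2s-1$ in Proposition~\ref{19}, the standing hypothesis $\deg\lambda_{n-3}<\deg\lambda_{n-2}$ ensuring that the monomial $\lambda_{n-3}[\omega_1]^{n-3}$ of \eqref{141} also stays below degree $(n-1)s$. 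Combined with $\deg A_i\leq is<(n-1)s$ for $i<n-1$, these two facts make the upper Newton polygon of $H$ climb with slope $s$ to the single vertex $(n-1,(n-1)s)$ and then descend on one facet of negative slope and length $1$. By Proposition~\ref{03} that last facet carries exactly one root of $H$ of absolute value $>1$, namely $Z_1$, and a degree-one factor $(Z-Z_1)\in\mathbb{F}_q((X^{-1}))[Z]$; hence $Z_1\in\mathbb{F}_q((X^{-1}))$ and $\omega_1=[\omega_1]+1/Z_1\in\mathbb{F}_q((X^{-1}))$. (That $A_n=\Lambda([\omega_1])\neq0$, so $\deg H=n$ and the root count is exact, follows from the irreducibility of $\Lambda$ over $\mathbb{F}_q(X)$ and $n\geq4$, since then $\Lambda$ has no root in $\mathbb{F}_q(X)\ni[\omega_1]$.)
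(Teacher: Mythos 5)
Your argument follows the paper's own proof in all essentials. Necessity is obtained exactly as in the paper: the polynomial part lies in $\mathbb{F}_q[X]$, Lemma \ref{07} gives the parity of $\deg\lambda_{n-2}$, and Lemma \ref{1211} excludes $\deg\lambda_{n-3}=\deg\lambda_{n-2}$. Sufficiency uses the same mechanism: the auxiliary polynomial $H$ with coefficients $A_i$ from \eqref{141}, whose upper Newton polygon is shown to have its unique peak at $(n-1,(n-1)s)$, after which Proposition \ref{03} produces the linear factor $(Z-Z_1)$ over $\mathbb{F}_q((X^{-1}))$. Your two degree computations are local variants of the paper's: for $\deg A_{n-1}=(n-1)s$ you add the leading coefficients $n\,a_s^{n-1}$ and $(n-2)\alpha_{2s}a_s^{n-3}=-(n-2)a_s^{n-1}$, while the paper rewrites $A_{n-1}=[\omega_1]^{n-2}([\omega_1]-[\omega_2])+\cdots$ and uses $\deg([\omega_1]-[\omega_2])=s$; both reduce to $2a_s^{n-1}\neq0$, which is where $q\neq 2^r$ enters in either version. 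For $\deg A_n<(n-1)s$ your factorization $A_n=\Lambda([\omega_1])=\prod_{j}([\omega_1]-\omega_j)$, with the single small factor $-\{\omega_1\}$, is cleaner than the paper's symmetric-function expansion (which brings in the unit-circle roots coming from the horizontal facet) and reaches the same bound.

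One step is wrong as written: you justify $A_n\neq0$ by the irreducibility of $\Lambda$ over $\mathbb{F}_q(X)$, but irreducibility is \emph{not} a hypothesis of Theorem \ref{19bis} — it is assumed in Theorem \ref{1128}, which invokes this theorem, but not here. Fortunately the claim does not need it. Your own factorization already gives $|A_n|=|\{\omega_1\}|\,q^{(n-1)s}>0$, because $\{\omega_1\}=1/Z_1\neq0$ and each of the other $n-1$ factors has absolute value $q^s$; alternatively, argue as the paper does: if $A_n=0$ then $A_{n-1}$ would be the coefficient of maximal degree of $H$, so Corollary \ref{04} would force $|Z_1|\leq1$, contradicting $|Z_1|>1$. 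With that one repair, your proof is correct and coincides with the paper's.
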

\begin{proof}
Let us show that the condition is necessary.
From
Theorem \ref{06}
the root $\omega_1$
belongs to $\overline{\mathbb{F}_q((X^{-1}))}$.
Assuming $\omega_1 \in
\mathbb{F}_q((X^{-1}))$
implies $[\omega_1]\in \mathbb{F}_{q}[X]$;
from Lemma \ref{07}, 
$\deg \lambda_{n-2}$ is even,
and, from 
Lemma \ref{1211}, 
$\deg\lambda_{n-3}<\deg\lambda_{n-2}$.

For sufficiency,
we consider that the 
root 
$\omega_1 \in
\overline{\mathbb{F}_{q}((X^{-1}))}
$
can be
decomposed as 
$\omega_1 = [\omega_1] +1/Z_1 ,$
with $|Z_1| > 1$ and
$[\omega_1] \in \mathbb{F}_{q}[X]$.
We keep the same notations
for $[\omega_1] $
as in \eqref{omega1omega2}.
The steps of the proof are now those
of the proof of Lemma \ref{1211}
until the equality \eqref{13}.
Denote  $2 s:= \deg\lambda_{n-2} > 0$.
We have 
$\deg \lambda_{n-3}\leq 2s-1$.

Since $n \geq 4$, 
the assumption $
\displaystyle\deg\lambda_{n-2}
=\max_{i\neq n-2}\deg(\lambda_i)$
means that the upper Newton polygon of
$\Lambda$ has an horizontal facet of length
$\geq 2$. Then there exists at least one root
of $\Lambda$,
say $\omega_3$, such that
$|\omega_3|=1$.
Using the expressions of the
symmetric functions $\lambda_j$s  of the roots
$\omega_1, \omega_2, \omega_3, \ldots$
as functions of
$[\omega_1], [\omega_2], [\omega_3], \ldots$, as
above, in \eqref{141}, i.e. in
$$
  A_n =  [\omega_1]^{n}+\lambda_{n-1}[\omega_1]^{n-1}+
  \lambda_{n-2}[\omega_1]^{n-2}+
  \ldots+\lambda_0,
$$
we deduce
$\deg A_n \leq (n-1)s-1$.

From the assumption
$2s= \displaystyle\deg\lambda_{n-2}> 2 \deg\lambda_{n-1}$
we deduce
\begin{equation}
\label{13ter}
 \deg \lambda_{n-1} = \deg([\omega_1]+[\omega_2]+[\omega_3]+
\sum_{j=4}^{n} [\omega_j])<s.
\end{equation}
Hence $a_s + b_s = 0$.
The condition  $q\neq 2^r$,
$r \geq 1$, implies
  $a_s\neq b_s$ and  $[\omega_1]\neq[\omega_2]$.
Hence the degree of
$$ [\omega_1]-[\omega_2]=2a_sX^s+(a_{s-1}-b_{s-1})X^{s-1}
+\ldots+(a_0-b_0)$$
is exactly $\deg([\omega_1]-[\omega_2])=s$.

Now the expressions of the coefficients
$A_{n-1}$ and $A_{n-2}$ are respectively:
\begin{eqnarray*}
  A_{n-1} &=& [\omega_1]^{n-2}([\omega_1]-[\omega_2])+(n-2)(Q[\omega_1]^{n-3}+\lambda_{n-3}[\omega_1]^{n-4}) \\
   &&-\lambda_{n-3}[\omega_1]^{n-4}+(n-4)\lambda_{n-4}[\omega_1]^{n-5}+\ldots+\lambda_1
\end{eqnarray*}
and
\begin{eqnarray*}
 A_{n-2} &=& (n-1)[\omega_1]^{n-3}([\omega_1]-[\omega_2]) +[\omega_1]^{n-3}[\omega_2]+ \\
 &+& \frac{(n-2)(n-3)}{2}[\omega_1]^{n-4}Q+\frac{(n-3)(n-4)}{2}\lambda_{n-3}[\omega_1]^{n-5}+\ldots+\lambda_1.
\end{eqnarray*}
Therefore
$$\deg A_{n-1}=(n-2)s+\deg([\omega_1]-[\omega_2])=(n-1)s$$
 and
$$\deg A_{n-2}=(n-2)s.$$
We have:
$\deg A_{n} < \deg A_{n-1},
\deg A_{n-2} < \deg A_{n-1}$
and it is easy to show
$$
   \displaystyle\max_{i\neq n-1}\deg A_i
   < \deg A_{n-1}.
$$
Now $A_n\neq0$; if not, 
by Corollary \ref{04}, we would have 
$|Z_1|\leq1$, a contradiction.
Finally, by Proposition \ref{03},
 the only root of $H$ which has 
 an absolute value~$>1$ 
 is $Z_1$ and $H$ admits the factor 
 $(Z-Z_1)\in\mathbb{F}_q((X^{-1}))[Z]$.
Then $Z_1\in\mathbb{F}_{q}((X^{-1}))$ and 
$\omega_1=[\omega_1]+\displaystyle\frac{1}{Z_1} 
\in \mathbb{F}_{q}((X^{-1}))$, completing the proof.
\end{proof}
\begin{remark}
 \end{remark}
\begin{itemize}
  \item[(i)]We mention that Theorem \ref{19bis} is not 
  always true in characteristic $3$ in the case
  \textbf{$\displaystyle\deg\lambda_{n-2}
  = 2 \deg\lambda_{n-1}$}  (see Example \ref{23}).
\item[(ii)] We note also that this theorem  is not 
always  true for any field of characteristic $p = 2$
 (see Example \ref{24}).
\end{itemize}
\begin{example}\label{23}
\end{example}
Let
\begin{equation}\label{25}
\Lambda(Y)=Y^3+(X+1)Y^2+X^2Y-X^2+2\in \mathbb{F}_3[X][Y].
\end{equation}
By Theorem \ref{06},
$\Lambda(Y)$ has two roots  $\omega_1$ and $\omega_2$ 
having an absolute value  strictly greater than $1$ 
and one root $\omega_3$
having an absolute value equal to $1$.
Set $\omega_1= X+ \displaystyle\frac{1}{Z_1}\in\mathbb{F}_{3}((X^{-1}))$ such that $|Z_1|>1$.
$Z_1$ is the root of  the polynomial defined by
\begin{equation}\label{eqI}
    2Z^3+2XZ^2+(X+1) Z+ 1=0.
\end{equation}
By Proposition \ref{03}, we deduce that $Z_1\in\mathbb{F}_{4}((X^{-1}))$
and  $\omega_1\in\mathbb{F}_{3}((X^{-1}))$.\\
Now set $\omega_2= X+1+ \displaystyle\frac{1}{Z_2}\in\mathbb{F}_{3}((X^{-1}))$ with $|Z_2|>1$.
 We obtain
$Z_2$ as a root of  the polynomial defined by
\begin{equation}\label{eqI_}
    Z^3+(X^2+X+1)Z^2+(2X^2+X+2) Z+ 1=0.
\end{equation}
Again by Proposition \ref{03}, we deduce that $Z_2\in\mathbb{F}_{3}((X^{-1}))$
and  $\omega_2\in\mathbb{F}_{3}((X^{-1}))$.\\
Since $\Lambda$ is monic and irreducible over $\mathbb{F}_3[X]$,
it follows that $(\omega_1,\omega_2)$ is 
a 2-Salem series  and $\Lambda$ is the 
minimal polynomial of $\omega_1$.
\vspace{0.4cm}

\noindent
{\it Proof of Theorem \ref{1128}.}
Let us prove the necessary condition for
(i) and (ii).
Assume that  $\omega_1\in \mathbb{F}_q((X^{-1}))$
and
$n \geq 3$. 
By Proposition
\ref{19}
or
Theorem \ref{19bis}, and the notations
in their respective proofs, we deduce that
$\deg\lambda_{n-2}$ is even
and $\neq 0$. 
Still with these notations, set
 \begin{align}
 \label{28}
\lambda_{n-2}&=\alpha_{2s}X^{2s}+\alpha_{2s-1}X^{2s-1}+\ldots+\alpha_{0}
=
[\omega_1][\omega_2]+Q
\\
\label{28bis}
 &= (a_{s}X^{s}+ a_{s-1}X^{s-1}+\ldots+a_0)(b_{s}X^{s}+ b_{s-1}X^{s-1}+\ldots+b_0)+Q.
\end{align} 
From \eqref{13bis}
or \eqref{13ter},
we have $\deg\lambda_{n-1}<s$.
Hence
    $a_s = -b_{s} \in \mathbb{F}_q$, 
    what implies the claim
$$  -\alpha_{2 s} = -a_{s} b_{s} = a_{s}^{2}
\neq 0.
$$
In addition, for $n \geq 4$,
Theorem \ref{19bis} implies that
$\deg\lambda_{n-3}<\deg\lambda_{n-2}$ holds.

Let us prove the sufficient condition for (i).
By Theorem \ref{06} the polynomial
$\Lambda$ has two roots
$\omega_1$ and
$\omega_2$ such that
$|\omega_1|>1, 
|\omega_2|>1$, with
at least one
conjugate $\omega_j$, $3\leq j\leq n$, 
on the unit circle.
Let $k$ denote the length
of the horizontal facet
of the upper Newton polygon.
Since $\deg\lambda_{n-3}<\deg\lambda_{n-2}$,
we have $k \geq 2$.
There are $k$ conjugates $\omega_{j}$,
$j=3,\ldots, 3+k-1$,
on the unit circle, by
Proposition \ref{03}.
Let
$$\omega_{j}
= c_{0}^{(j)} + c_{-1}^{(j)} X^{-1} 
+ \ldots \in \overline{\mathbb{F}_{q}((X^{-1}))},\qquad
j=3, \ldots, 3+k-1.$$
From Proposition \ref{03} (ii), 
we can see
$\displaystyle\sum_{j=3}^{3+k-1}
\omega_{j}
\in \mathbb{F}_{q}((X^{-1}))$
and therefore
$\displaystyle\sum_{i=3}^{3+k-1}
c_{0}^{(i)} \in \mathbb{F}_q$. 
Now
$$\lambda_{n-1} 
= 
\beta_s X^s + \beta_{s-1} X^{s-1} + \ldots 
+ \beta_0$$
$$
= -\bigl([\omega_1 ] + [\omega_2 ] +
\sum_{i=3}^{3+k-1}
c_{0}^{(i)}
\bigr).$$
Thus
\begin{equation}
\label{betta0i}
-\beta_i = a_i + b_i, \qquad
1 \leq i \leq s.
\end{equation}
and
\begin{equation}
\label{betta00}
-\beta_0 = a_0 + b_0 +
\sum_{i=3}^{3+k-1}
c_{0}^{(i)} .
\end{equation}

Suppose  $\alpha_{2s}= -a^2$
where $s \geq 1$ and
$a \in \mathbb{F}_q$ is nonzero. 
Let us put $a_s=a$. Then $b_s = -a$
and $\beta_s = 0$.
We deduce
$$\alpha_{2s -1}
=a_{s}b_{s-1}+a_{s-1}b_{s}
=a(b_{s-1}-a_{s-1}),
$$
then $b_{s-1}-a_{s-1} \in \mathbb{F}_q$.
Since
$q \neq 2^r$, for any $r \geq 1$,
and that
$b_{s-1}+a_{s-1} = -\beta_{s-1} \in \mathbb{F}_q$,
we have
$$a_{s-1}, b_{s-1} \in \mathbb{F}_q .$$
Let us show recursively that
$$a_{s-i}, b_{s-i} \in \mathbb{F}_q , \qquad i=2, 3, \ldots, s.$$
Let us assume that
$a_{s-j}, b_{s-j} \in \mathbb{F}_q $ holds
for $j=0,1, \ldots, i-1$.
 From \eqref{28bis}, we deduce 
\begin{align*}\label{as}
    \alpha_{2s-i}&=a_{s}b_{s-i}+
    a_{s-1}b_{s-i+1}+\ldots+a_{s-i}b_s\\
                &=a(b_{s-i}-a_{s-i})+d_{s-i}
\end{align*}
where
$$
   d_{s-i}:=a_{s-1}b_{s-i+1}+\ldots +b_{s-1}a_{s-i+1}
   \in \mathbb{F}_q , \qquad
   i=2, \ldots, s.
$$
Hence
\begin{equation}
\label{diff}
b_{s-i}-a_{s-i} = a^{-1} (\alpha_{2s-i}
-d_{s-i})
\in \mathbb{F}_q .
\end{equation}
Since
$b_{s-i}+a_{s-i} = -\beta_{s-i} \in \mathbb{F}_q$,
we have
$$a_{s-i}, b_{s-i} \in \mathbb{F}_q .$$

Let us note  $d_{s-1}=0$.
Combining \eqref{betta0i}
\eqref{betta00} and \eqref{diff}, we obtain
\begin{equation}
\label{result_ai}
a_i = - 2^{-1}(\beta_{i} + 
a^{-1}( \alpha_{s+i} - d_i)),
\qquad 0\leq i \leq s-1.
\end{equation}
Therefore, $[\omega_1]\in \mathbb{F}_{q}[X]$ 
and from Theorem \ref{19bis},  we obtain $\omega_1\in\mathbb{F}_q((X^{-1}))$.
In the same way, we can show 
that $\omega_2\in\mathbb{F}_q((X^{-1}))$.
As $\Lambda$ is monic and irreducible over 
$\mathbb{F}_q[X]$, then $\omega_1$ is an algebraic integer.
Therefore $(\omega_1,\omega_2)$ is a 2-Salem element
in $T'^{\ast}_{2}$.
\vspace{0.2cm}

Let us give the proof of the sufficency condition
for (ii), in the same way.
By Theorem \ref{06} the polynomial
$\Lambda$ has two roots
$\omega_1$ and
$\omega_2$ such that
$|\omega_1|>1, 
|\omega_2|>1$, and the third one
$\omega_3$ is
on the unit circle.
For $n=3$,
the assumptions
$\deg\lambda_{1}
=\deg(\lambda_0)$ 
and
$\deg\lambda_{1}
> 2\deg(\lambda_2)$ hold.
Then
Proposition \ref{19} can be applied to
obtain the result.
We have just to show that
$[\omega_1]\in \mathbb{F}_{q}[X]$.
For proving 
$[\omega_1]\in \mathbb{F}_{q}[X]$
we proceed as above, from
\eqref{28} to \eqref{result_ai},
except
that $-\beta_0$ is now equal to
$a_0 + b_0 +
c_{0}$ with
$$\omega_3 = c_0 + c_1 X^{-1}+\ldots
\quad \in \overline{\mathbb{F}_{q}((X^{-1}))} .$$

(iii) This assertion follows immediately from 
Corollary \ref{05}.
\begin{flushright}
$\Box$
\end{flushright}
\begin{remark}
\end{remark}
Note that Theorem \ref{1128} (i) is not always true in the case $\deg\lambda_{n-2}=2\deg\lambda_{n-1}$.
To show this, we construct two counter-examples.
\begin{example}
\end{example}
Let $\Lambda$ the polynomial over $\mathbb{F}_3[X]$ 
which is  defined by \eqref{25}. 
Then,
in view of the above,   $\Lambda$ satisfies
the conditions $\deg\lambda_{n-2}=2\deg\lambda_{n-1}$ 
and $-1$ is not a square in $\mathbb{F}_3$.
In contrast, $\Lambda$ has two dominant roots 
$\omega_1, \omega_2\in\mathbb{F}_{3}((X^{-1}))$.
\begin{example}
\end{example}
The polynomial
$$
    \Lambda_2=Y^4-XY^3+X^2Y^2+XY+X^2+1 \in\mathbb{F}_5[X][Y]
$$
satisfies the conditions $\deg\lambda_{n-2}=2\deg\lambda_{n-1}$ and $-1$ is a square in $\mathbb{F}_5$.
By Proposition \ref{03} $(i)$,  $\Lambda_2$ has exactly two dominant roots  $ \omega_1$  and $ \omega_2$ with
$$
\deg\omega_1=\deg\omega_2=1.
$$
The other conjugated roots  $ \omega_3$  and $ \omega_4$ have the same degree equal to $0$.
Suppose $[\omega_1]\in \mathbb{F}_{5}[X]$, using the fact that
$$[\omega_1]+ [\omega_2]+[\omega_3]+ [\omega_4]=X,
$$ this yields that $[\omega_2]\in \mathbb{F}_{5}[X]$.
Let
$$  [\omega_1]= a_{1}X+ a_{0}~~ ,\quad~~  [\omega_2]= b_{1}X+ b_{0} $$
 and
$$  [\omega_3]= c_{0}~~ ,\quad~~  [\omega_2]=  d_{0} $$
where $a_{1}, b_{1},  c_{0}$ and $d_{0}$ are four integers in $\mathbb{F}_5\backslash\{0\}$. It follows that
$$
   a_{1}+ b_{1}=  a_{1} b_{1}=1.
$$

These equations have no solutions in $\mathbb{F}_{5}$.

\section{A criterium of irreducibility}

In the following the assumption
``$\lambda_0 \neq 0$" is replaced by
the stronger hypothesis
``$\Lambda$ has no root in $\mathbb{F}_q$"
in order to reach the property of being
irreducible.

\begin{lemma} \label{08}
Let $n \geq 3$.
Let $\Lambda$ be defined by
$$\Lambda(Y)=Y^{n}+\lambda_{n-1}Y^{n-1}+\lambda_{n-2}Y^{n-2}
+\ldots+\lambda_{1}Y+\lambda_{0}
\quad\in\mathbb{F}_{q}[X][Y].$$
Suppose  that $\Lambda$ has no root
in $\mathbb{F}_q$ and 
$\displaystyle\max_{i < n-3}\deg\lambda_i
< 
\deg\lambda_{n-3}=
\deg\lambda_{n-2}\geq2\deg\lambda_{n-1}$.
If  $\deg(\lambda_{n-2})$ is odd, then $\Lambda$ is irreducible over $\mathbb{F}_q[X]$.
\end{lemma}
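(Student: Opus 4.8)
The goal is to show that $\Lambda$ is irreducible over $\mathbb{F}_q[X]$ under the stated hypotheses, namely that $\Lambda$ has no root in $\mathbb{F}_q$ and that $\max_{i<n-3}\deg\lambda_i < \deg\lambda_{n-3} = \deg\lambda_{n-2} \geq 2\deg\lambda_{n-1}$ with $\deg\lambda_{n-2}$ odd. The strategy is to analyze the possible factorizations of $\Lambda$ by combining two sources of information: first, the shape of the upper Newton polygon, which via Proposition \ref{03} and Lemma \ref{07} constrains where the roots of a putative factor can lie; second, degree and symmetric-function bookkeeping on any hypothetical factor. First I would invoke Theorem \ref{06} (valid since $\deg\lambda_{n-1} < \deg\lambda_{n-2} = \max_{i\neq n-2}\deg\lambda_i$, which follows from the assumptions) to conclude that $\Lambda$ has exactly two dominant roots $\omega_1,\omega_2$ with $|\omega_1|,|\omega_2|>1$ and the remaining roots of absolute value $\leq 1$, at least one on the unit circle.

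\textbf{Ruling out factors.} The crucial input is Lemma \ref{07}: since $\deg\lambda_{n-2}\geq 2\deg\lambda_{n-1}$ and $\deg\lambda_{n-2}$ is odd, neither dominant root lies in $\mathbb{F}_q((X^{-1}))$. In fact, from Proposition \ref{03}(i) the facet joining $(n-2,\deg\lambda_{n-2})$ to $(n,0)$ has slope $-\tfrac{1}{2}\deg\lambda_{n-2}$, so $\deg\omega_1 = \deg\omega_2 = \tfrac{1}{2}\deg\lambda_{n-2}\notin\mathbb{Z}$. The plan is to argue that $\omega_1$ and $\omega_2$ must therefore be conjugate to one another and cannot be separated into distinct factors with coefficients in $\mathbb{F}_q[X]$. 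Concretely, suppose $\Lambda = PG$ is a nontrivial factorization over $\mathbb{F}_q[X]$ with $P,G$ monic. Each irreducible factor has a Newton polygon whose facets are sub-facets of those of $\Lambda$; because the slope $-\tfrac12\deg\lambda_{n-2}$ is non-integral, any factor containing $\omega_1$ must also contain $\omega_2$ (a single root of non-integral degree cannot generate an extension realizing a valuation in $\mathbb{Z}$ as its polynomial part would require). So $\omega_1,\omega_2$ lie in the same factor, say $P$, forcing the other factor $G$ to have all its roots on or inside the unit circle.

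\textbf{The main obstacle and its resolution.} The hard part will be excluding a factorization in which $G$ is a nonconstant polynomial collecting only roots of absolute value $\leq 1$. Here the hypothesis $\max_{i<n-3}\deg\lambda_i < \deg\lambda_{n-3} = \deg\lambda_{n-2}$ is essential: it pins down the shape of the lower portion of the Newton polygon, controlling exactly how many roots sit strictly inside versus on the unit circle and the degrees of the symmetric functions of the small roots. The plan is to show that if $G$ were nonconstant with $\deg_Y G = m \geq 1$, then its constant term $G(0)$ would be a product of the collected roots, all of absolute value $\leq 1$, hence $|G(0)|\leq 1$, i.e. $G(0)\in\mathbb{F}_q$. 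Pairing this with the condition that $\Lambda$ has no root in $\mathbb{F}_q$, I would rule out the case $m=1$ (a linear factor $Y - \gamma$ would give a root $\gamma\in\mathbb{F}_q[X]$ with $|\gamma|\leq 1$, and the no-root-in-$\mathbb{F}_q$ hypothesis together with the precise degree data forces a contradiction). For $m\geq 2$, I would use the facet-length count from Proposition \ref{03}(ii) together with the sharp degree equalities in the hypothesis to show the upper-Newton-polygon data of $G$ and of the complementary factor cannot both be consistent with integer coefficients, the parity obstruction from the odd degree $\deg\lambda_{n-2}$ again being the decisive ingredient. Assembling these exclusions shows no nontrivial factorization exists, so $\Lambda$ is irreducible.
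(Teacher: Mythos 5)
Your plan is essentially the mirror image of the paper's proof, built from the same two mechanisms but deployed in the opposite order. The paper assumes a factorization $\Lambda=\Lambda_1\Lambda_2$ and \emph{first} eliminates the configurations in which both dominant roots lie in the same factor: the complementary factor would then have all roots of absolute value $\leq 1$, so either its constant term has absolute value strictly less than $1$ (forcing it to vanish, hence $\lambda_0=0$, impossible since $0\in\mathbb{F}_q$ is not a root of $\Lambda$), or it is linear with the unit root $\omega_3\in\mathbb{F}_q$ as a root, contradicting the no-root-in-$\mathbb{F}_q$ hypothesis. Only \emph{then} does the paper treat the split case $\Lambda_1(\omega_1)=\Lambda_2(\omega_2)=0$, where $\deg\lambda_{n-2}=\deg A_{s-1}+\deg B_{m-1}$ combined with $\deg\lambda_{n-2}\geq 2\deg\lambda_{n-1}$ forces $\deg A_{s-1}=\deg B_{m-1}$, making $\deg\lambda_{n-2}$ even --- the parity contradiction. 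You run the dichotomy backwards: you use the parity/integrality obstruction first, to force $\omega_1,\omega_2$ into the same factor. That step is correct, though your parenthetical justification is garbled; the clean version is that a factor containing $\omega_1$ but not $\omega_2$ would have its subleading coefficient (minus the sum of its roots) of absolute value exactly $q^{\frac{1}{2}\deg\lambda_{n-2}}$, which is not an integral power of $q$, impossible for an element of $\mathbb{F}_q[X]$ (equivalently, a length-one facet of its Newton polygon joins lattice points and so cannot have slope $-\frac{1}{2}\deg\lambda_{n-2}\notin\mathbb{Z}$). Both orderings are legitimate, and yours is arguably tidier because the parity step becomes a one-liner.

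The genuine gap is your treatment of the remaining case $m=\deg_Y G\geq 2$. You propose ``facet-length counts'' showing the Newton-polygon data of the factors ``cannot both be consistent with integer coefficients,'' with ``the parity obstruction from the odd degree $\deg\lambda_{n-2}$ again being the decisive ingredient.'' This is not an argument, and parity is in fact irrelevant here: all roots of $G$ have absolute value $\leq 1$, their degrees have nothing to do with $\frac{1}{2}\deg\lambda_{n-2}$, and a polynomial over $\mathbb{F}_q[X]$ whose Newton polygon has only nonnegative slopes exists in abundance. The decisive ingredient is instead the hypothesis $\max_{i<n-3}\deg\lambda_i<\deg\lambda_{n-3}=\deg\lambda_{n-2}$, which makes the horizontal facet of $\Lambda$'s polygon have length exactly $1$, so $\Lambda$ has \emph{exactly one} root on the unit circle. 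Consequently a factor $G$ with $m\geq 2$ and all roots of absolute value $\leq 1$ must contain at least one root of absolute value strictly less than $1$, whence $|G(0)|<1$ \emph{strictly}; since $G(0)\in\mathbb{F}_q[X]$, this forces $G(0)=0$, i.e.\ $\lambda_0=\Lambda(0)=0$, contradicting the hypothesis that $\Lambda$ has no root in $\mathbb{F}_q$ (note $0\in\mathbb{F}_q$). This one-line replacement uses only facts you had already set up for $m=1$, and with it your proof closes correctly.
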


\begin{proof}
By considering the upper Newton polygon of $\Lambda$, 
the polynomial $\Lambda$ has exactly two roots 
$\omega_1$ and $\omega_2$ such that  
$|\omega_1| > 1$ and $|\omega_2|>1$, one
root $\omega_3$ such that $|\omega_3|=1$ and
the
remaining roots $\omega_4,\ldots,\omega_n$ have an 
absolute value strictly less than $1$. 
Suppose that $\Lambda(Y)$ admits the 
decomposition
$$\Lambda(Y)~=~\Lambda_1(Y).\Lambda_2(Y)$$
\begin{equation} \label{09}
=~ (Y^s+A_{s-1}Y^{s-1}+\ldots+A_{1}Y+A_0)
(Y^m+B_{m-1}Y^{m-1}+\ldots+B_{1}Y+B_0)
\end{equation}
with $\Lambda_1, \Lambda_2\in\mathbb{F}_{q}[X][Y]$ 
and  $s>0,~m>0$.

There are several cases to show the contradiction. 
If we had
$\Lambda_1(\omega_i)=0 $ for 
$i = 1, 2, 3$, all the
roots of $\Lambda_2$ would have an absolute value 
strictly less  $1$,  which is a contradiction, because
 $|B_0|>1$. 
If we had 
$\Lambda_1(\omega_i)=0 $ for 
$i = 1, 2$, and $\Lambda_{2}(\omega_3) =0$,
with $m = \deg\Lambda_2 > 1$, 
then one of the 
roots of $\Lambda_2$ would have an
absolute value equal to $1$ and the
other roots of $\Lambda_2$
have an absolute value strictly less  $1$,  
which is a contradiction,
since $|B_0|>1$. 
Now, if $\Lambda_1(\omega_1)=\Lambda_1(\omega_2)=0$,
and $\Lambda_2(\omega_3)=0$ with 
$\deg\Lambda_2 = 1$,
all the other conjugates of $\omega_1$ 
are roots of  $\Lambda_1$,  
then, from
\eqref{09},
\begin{equation}
\label{decomp}
    \Lambda(Y) =(Y^{n-1}+A_{n-2}Y^{n-2}+\ldots+A_{1}Y
    +A_0)(Y+B_0)
    \quad \in \mathbb{F}_q[X][Y]
\end{equation}
with $\deg B_0=\deg (\omega_3)=0$, and then
$B_0=b_0\in \mathbb{F}_q\backslash\{0\}$. 
This is in contradiction with the assumption.

Then we can conclude that 
$\Lambda_1(\omega_1)=0 $ and $\Lambda_2(\omega_2)=0$.
The remaining roots of $\Lambda_1$ and $\Lambda_2$ 
have an absolute value $\leq1$. 
\qquad \qquad$(\ast\ast\ast)$

Let us continue the generic case, assuming
$\Lambda_{1}(\omega_1)=
\Lambda_{1}(\omega_3)
=0$
and  $\Lambda_{2}(\omega_2)=0$.
Since $-A_{s-1}$ (resp. $-B_{m-1}$) is the sum 
of the roots of $\Lambda_1$ (resp. $\Lambda_2$) 
and by the symmetric functions of the roots, 
it follows that
$$\deg A_{s-1}
=\deg \omega_1
= \displaystyle\max_{i\neq s-1}\deg A_{i} 
\quad{\rm and}\quad 
\deg B_{m-1}
=\deg\omega_2
>\displaystyle\max_{j\neq m-1}\deg B_{j}.$$
In particular we have:
$|A_{s-2}|\leq |\omega_1|$ and $|B_{m-2}|<|\omega_2|$.
Then
   $$\deg\lambda_{n-2}= 
   \deg (A_{s-2}+ A_{s-1} B_{m-1}+ B_{m-2})
   =\deg A_{s-1}+\deg B_{m-1} .$$
But the assumption
$\deg\lambda_{n-2}\geq
2\deg\lambda_{n-1}$ means that
$$\deg A_{s-1}+\deg B_{m-1} \geq
2 \max\{\deg A_{s-1}, \deg B_{m-1}\},$$
from which we deduce
$$\deg A_{s-1}= \deg B_{m-1},$$
and then $\deg\lambda_{n-2}= 2 \deg A_{s-1}$.
By Lemma \ref{07}, 
$\Lambda$ would have no root
in  $\mathbb{F}_q((X^{-1}))$ 
with absolute value $ > 1$,
 a contradiction.
 We deduce the irreducibility
 of $\Lambda$ over $\mathbb{F}_{q}[X]$.
\end{proof}

\begin{theorem}\label{31}

Let $n\geq4$ and suppose $q \neq 2^r$ for any $r \geq 1$. 
Let $\Lambda$ be the polynomial   
\begin{equation}\label{202}
    \Lambda(Y):=Y^{n}+\lambda_{n-1}Y^{n-1}+
    \lambda_{n-2}Y^{n-2}+\ldots+\lambda_{1}Y+\lambda_{0}
    \quad\in\mathbb{F}_{q}[X][Y].
\end{equation}
Suppose  that
$\Lambda$ has no root in
$\mathbb{F}_q$,
and assume that
the coefficients $\lambda_i$ satisfy
\begin{itemize}
\item[(i)]
$
\displaystyle
\max_{i\in\{1,2,\ldots,n-4\}\cup \{n-1\}}
\deg\lambda_i < 
\deg\lambda_{n-3} 
=\deg\lambda_{n-2} < 2\deg \lambda_{n-1}$,
\item[(ii)]
$\displaystyle
\frac{\deg \lambda_{i+1}+\deg\lambda_{i-1}}{2}<\deg\lambda_{i}\qquad {\rm for} ~~1\leq i \leq n-4$, 
\item[(iii)]
$\displaystyle\deg\lambda_{n-2}-\deg\lambda_{n-1}<\deg\lambda_{n-4}<\deg\lambda_{n-1}$.
\end{itemize}
\vspace{0.1cm}

\noindent
Then
$(\omega_1,\omega_2)$ is a  \textit{2-Salem} element 
and $\Lambda$ is its minimal polynomial.
 \end{theorem}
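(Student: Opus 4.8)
The plan is to read the root structure off the upper Newton polygon, deduce the $2$-Salem absolute-value pattern from Corollary \ref{05}, and then prove irreducibility by the factorisation method of Lemma \ref{08}, the adaptation to the present regime $\deg\lambda_{n-2}<2\deg\lambda_{n-1}$ being the delicate point.

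First I would analyse the upper Newton polygon. Condition (i) gives $2\deg\lambda_{n-1}>\deg\lambda_{n-2}$, so $(n-1,\deg\lambda_{n-1})$ is a vertex and the two rightmost facets $[(n-1),(n)]$ and $[(n-2),(n-1)]$ have length $1$; by Proposition \ref{03} they produce roots $\omega_1,\omega_2\in\mathbb{F}_q((X^{-1}))$ with $|\omega_1|=q^{\deg\lambda_{n-1}}>1$ and $|\omega_2|=q^{\deg\lambda_{n-2}-\deg\lambda_{n-1}}>1$. Since $\deg\lambda_{n-3}=\deg\lambda_{n-2}$ while $\deg\lambda_{n-4}<\deg\lambda_{n-2}$ by (iii), the facet $[(n-3),(n-2)]$ is horizontal of length $1$, giving one root $\omega_3$ with $|\omega_3|=1$; and the strict concavity (ii), together with the positive slope of $[(n-4),(n-3)]$ guaranteed by (iii), makes $(0),\dots,(n-3)$ consecutive vertices of strictly increasing positive slope, so the remaining $n-3$ facets have length $1$ and give $\omega_4,\dots,\omega_n$ of pairwise distinct negative degrees, i.e.\ $|\omega_j|<1$. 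The increasing degrees also force $\deg\lambda_0<\deg\lambda_{n-3}=\deg\lambda_{n-2}$, so $\max_{0\le k<n-2}\deg\lambda_k=\deg\lambda_{n-2}<2\deg\lambda_{n-1}$: these are exactly the hypotheses of Corollary \ref{05}, which yields the $2$-Salem absolute-value pattern. Hence, once $\Lambda$ is shown irreducible, $\omega_2$ is a conjugate of $\omega_1$ and $(\omega_1,\omega_2)$ is a $2$-Salem element with minimal polynomial $\Lambda$.

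It remains to establish irreducibility, and here I would argue by contradiction. As $\Lambda$ has no root in $\mathbb{F}_q$, $\lambda_0=(-1)^n\prod_i\omega_i\neq0$. Suppose $\Lambda=\Lambda_1\Lambda_2$ with monic $\Lambda_1,\Lambda_2\in\mathbb{F}_q[X][Y]$ of positive degree and constant terms $A_0,B_0$; then $A_0B_0=\lambda_0\neq0$, so $|A_0|,|B_0|\ge1$. A factor whose roots all have absolute value $\le1$ has $|\text{constant term}|\le1$, with equality only if every root lies on the unit circle; the sole such root is $\omega_3$ (of degree $0$), so that factor would be $Y-\omega_3$ with $\omega_3\in\mathbb{F}_q$, which is excluded. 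Thus each factor carries a dominant root, and we may take $\omega_1\in\Lambda_1$, $\omega_2\in\Lambda_2$. Now the upper bound of (iii), $\deg\lambda_{n-4}<\deg\lambda_{n-1}$, rewrites as $\deg\omega_j\le\deg\omega_4=\deg\lambda_{n-4}-\deg\lambda_{n-2}<\deg\lambda_{n-1}-\deg\lambda_{n-2}=-\deg\omega_2$ for all $j\ge4$; hence if any $\omega_j$ with $j\ge4$ lay in $\Lambda_2$ we would get $\deg B_0\le\deg\omega_2+\deg\omega_j<0$, contradicting $|B_0|\ge1$. Therefore $\omega_4,\dots,\omega_n$ all lie in $\Lambda_1$, and $\Lambda_2$ collapses to either $Y-\omega_2$ (so $\omega_2\in\mathbb{F}_q[X]$) or $(Y-\omega_2)(Y-\omega_3)$.

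The hard part will be to rule out these two collapsed shapes; this is where $q\neq2^r$ and the lower bound of (iii) enter, exactly as in the sufficiency argument of Theorem \ref{19bis}. In both shapes $\Lambda_1=\Lambda/\Lambda_2$ carries $\omega_1$ and every small root, so its coefficient degrees are rigidly fixed by the distinct degrees of $\omega_4,\dots,\omega_n$ supplied by (ii); expanding $\Lambda=\Lambda_1\Lambda_2$ and matching coefficients against the symmetric-function expressions in $[\omega_1],[\omega_2],\dots$ used in \eqref{15}--\eqref{16} pins down $B_0$ and $B_1$. Because $\deg\lambda_{n-2}<2\deg\lambda_{n-1}$ the dominant roots have unequal degrees, so the parity device of Lemma \ref{08} is unavailable; instead one invokes $q\neq2^r$ to keep $\deg([\omega_1]-[\omega_2])$ equal to $\deg\omega_1$, uses the bound $-\deg\omega_1<\deg\omega_4$ from (iii) to control the tail, and then checks that $B_0$ cannot at once be a nonzero polynomial of the forced degree and have vanishing fractional part. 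This contradiction finishes the irreducibility, and with the first paragraph proves the theorem.
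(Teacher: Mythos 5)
Your Newton-polygon analysis and the first stage of the irreducibility argument are correct and essentially follow the paper: Corollary \ref{05} gives the absolute-value pattern, the Lemma \ref{08}-type case elimination forces $\omega_1$ and $\omega_2$ into different factors $\Lambda_1,\Lambda_2$, and your observation that (iii) pushes every root of absolute value $<1$ into the factor of $\omega_1$ (else $\deg B_0\le\deg\omega_2+\deg\omega_j<0$ for a nonzero $B_0\in\mathbb{F}_q[X]$) is clean --- cleaner, in fact, than the paper's treatment, which only relocates $\omega_4$ and then computes with coefficients $B_{m-3},B_{m-4}$ of a factor $\Lambda_2$ of degree $m\ge 4$, a configuration your own $B_0$ argument shows to be impossible. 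But your proof stops exactly where the difficulty starts: the two collapsed shapes $\Lambda_2=Y-\omega_2$ and $\Lambda_2=(Y-\omega_2)(Y-\omega_3)$ are never excluded. Your last paragraph is a plan, not an argument, and its ingredients are off target: in the present regime $\deg\lambda_{n-2}<2\deg\lambda_{n-1}$ the two dominant roots have \emph{different} degrees, so $\deg([\omega_1]-[\omega_2])=\deg\omega_1$ holds trivially and the hypothesis $q\ne 2^r$ (whose role you import from Theorem \ref{19bis}, where $\deg\omega_1=\deg\omega_2$ and leading coefficients cancel) contributes nothing here; likewise no coefficient matching can show that ``$B_0$ cannot be a nonzero polynomial of the forced degree''.

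In fact that deferred step cannot be completed, because the collapsed shape is degree-theoretically indistinguishable from an irreducible $\Lambda$ and can genuinely occur under hypotheses (i)--(iii). Over $\mathbb{F}_3$, with $n=4$, take
\[
\Lambda(Y)=(Y-X)\bigl(Y^3+X^3Y^2+X^3Y+X\bigr)=Y^4+(X^3-X)Y^3+(X^3-X^4)Y^2+(X-X^4)Y-X^2 .
\]
This $\Lambda$ has no root in $\mathbb{F}_3$, and its coefficient degrees $(\deg\lambda_3,\ldots,\deg\lambda_0)=(3,4,4,2)$ satisfy (i) ($3<4=4<6$), (ii) (vacuous for $n=4$) and (iii) ($1<2<3$); yet it is reducible, with $\omega_2=X$ --- precisely your first collapsed shape. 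So the gap in your proposal is genuine and not fixable by the method you sketch; it is also exactly the point where the paper's own proof breaks down, since its ``without loss of generality $\Lambda_2(\omega_3)=0$'' together with the assertion $\deg B_{m-3}>1$ silently excludes this configuration. Any correct completion would require strengthening the hypotheses of the theorem (for instance forbidding roots in $\mathbb{F}_q[X]$, not merely in $\mathbb{F}_q$), not a sharper degree estimate.
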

 \begin{proof}
By Corollary \ref{05}, $\Lambda(Y)$ 
has two roots 
$\omega_1$ and $\omega_2$ 
in  $\mathbb{F}_q((X^{-1}))$,
such that
$|\omega_1|>1$ and $|\omega_2|>1$,
and there is exactly one conjugate
$\omega_3$ which
lies on the unit circle.
Denote $s := \deg (\omega_2 )$ and 
$m:= \deg (\omega_1 )$ respectively.
They satisfy
$$1<|\omega_2|= q^{\displaystyle\deg\lambda_{n-2}-\deg\lambda_{n-1}} =q^m< |\omega_1|= q^{\displaystyle\deg\lambda_{n-1}}=q^s .$$
The other conjugates $\omega_4,\ldots, \omega_n
\in \overline{\mathbb{F}_q((X^{-1}))}$
have an absolute value strictly less than $1$. 
Since  
$\displaystyle\frac{\deg \lambda_{i+1}+
\deg\lambda_{i-1}}{2}<\deg\lambda_{i}$,
for $1\leq i \leq n-4$, then
$\deg \lambda_{i+1}
-\deg\lambda_{i}
<
\deg\lambda_{i}
-\deg\lambda_{i-1}$;
all the facets of the upper Newton polygon of $\Lambda$ are of length 1.
We have
$$|\omega_j| = q^{-k_j}<1, \qquad \quad
4\leq j \leq n,$$
with
\begin{equation}
\label{k_j}
-k_j=\deg \omega_j= \deg \lambda_{n-j}
-\deg\lambda_{n-j+1}.
\end{equation}

We now assume that $\Lambda$
is reducible and
show the contradiction.
With the same notations 
as in the proof of
Lemma \ref{08}, let us suppose that 
$\Lambda(Y)$ admits the 
decomposition
$$\Lambda(Y)~=~\Lambda_1(Y).\Lambda_2(Y)$$
\begin{equation} \label{09_2}
=~ (Y^s+A_{s-1}Y^{s-1}+\ldots+A_{1}Y+A_0)
(Y^m+B_{m-1}Y^{m-1}+\ldots+B_{1}Y+B_0)
\end{equation}
as in
\eqref{09}.
Then we discard the impossible cases
as in the proof of 
Lemma \ref{08}, i.e. from
\eqref{09}
 until $(\ast\ast\ast)$ in the same steps.
We conclude that $\Lambda_1(\omega_1)=0 $ and 
$\Lambda_2(\omega_2)=0$.

Now suppose that $\Lambda_2(\omega_3)=0$, without loss of generality; 
so we obtain  $\omega_1 \in S^\ast$ and 
$\omega_2\in T^\ast$.
Applying Theorem \ref{01}, we get
\begin{equation}\label{12}
  s = \deg A_{s-1}=\deg \omega_1>
  \displaystyle\max_{i\leq s-2}\deg A_{i} 
  \end{equation}
  and
\begin{equation}\label{12_B} 
m = \deg B_{m-1}= \deg B_{m-2}
 =\deg\omega_2>\displaystyle\max_{j\leq m-3}\deg B_{j}.
\end{equation}

The contradiction will come
from the coefficient
$\lambda_{n-4}$.
From \eqref{09_2}, 
\begin{equation}\label{3}
\lambda_{n-4} = A_{s-4}+A_{s-3}B_{m-1}+A_{s-2}B_{m-2}+A_{s-1}B_{m-3}+ B_{m-4}  .
\end{equation}
Let us examine the degrees of the terms of the sum.
First we can see that $\Lambda_1(\omega_4)=0$. 
Indeed, if we assume
$\Lambda_1(\omega_4)\neq0$,
by the symmetric functions of the roots of  
$\Lambda_2$ we would obtain, using (i) and
\eqref{k_j},
\begin{equation*}
    \deg B_{m-3}=\deg (\omega_2\omega_3\omega_4)=\deg\lambda_{n-4}-\deg\lambda_{n-1}<0,
\end{equation*}
a contradiction.
In the list $\{\omega_1 ,
\omega_2 , \omega_3 ,
\omega_4 , \ldots , \omega_n\}$
the roots $\omega_1$ and
$\omega_4$ are roots of $\Lambda_1$,
the roots $\omega_2$ and
$\omega_3$ are roots of $\Lambda_2$,
and the other roots are distributed
as roots of $\Lambda_1$
or $\Lambda_2$. Then
$\deg  B_{m-3}
> 1$.
From \eqref{12} we deduce
\begin{equation}
\label{term13}
\max\{\deg A_{s-3}, \deg A_{s-4}\}
<
s = \deg A_{s-1}< \deg A_{s-1}+\deg  B_{m-3}
=
\deg (A_{s-1}B_{m-3}).
\end{equation}
On the other hand, 
$\deg A_{s-2} > 0$.
From \eqref{12_B} we deduce
$$\max\{\deg B_{m-3}, \deg B_{m-4}\}
<
\deg  B_{m-2} =m=\deg B_{m-1} 
< \deg A_{s-2}+\deg B_{m-2}.$$
Let us show that 
$  \deg(A_{s-2}B_{m-2})  <  s.$

Indeed, from (iii),
$ \lambda_{n-4}< \deg\lambda_{n-1}=s$; then
$$\deg \lambda_{n-4}=\deg(\omega_1\omega_2\omega_3\omega_4)=\deg(\omega_1)+\deg(\omega_2)+\deg(\omega_4)=s+m+\deg(\omega_4)<s.
$$
Thus
$$\deg(\omega_4)<-\deg(\omega_2)=-m,
$$
what means
$$  \deg(A_{s-2}B_{m-2}) = \deg(\omega_1)+\deg(\omega_4) +\deg B_{m-1} <  s -m+m=s.$$  
In the same way, using (ii),
$$  \deg(A_{s-3}B_{m-1}) = \deg(\omega_1)+\deg(\omega_4) +\deg(\omega_5)+\deg B_{m-1} <  s -m-(m+1)+m<s.$$

We deduce
$$\deg (\lambda_{n-4})=  
\deg (A_{s-1}B_{m-3}).$$
But, from \eqref{term13}, we have
$$\deg (\lambda_{n-4})=  
\deg (A_{s-1}B_{m-3}) > s.$$
The contradiction comes from
(iii) since $\deg (\lambda_{n-4})$ should be 
$< s = 
\deg \lambda_{n-1}$.

Therefore $\Lambda(Y)$ is irreducible 
over $\mathbb{F}_{q}[X]$.
Finally, since $\Lambda(Y) $ is monic, then 
$ (\omega_1, \omega_2) $ is a  2-Salem element
and $ \Lambda $ is its minimal polynomial.
\end{proof}

\begin{example}{ 2-Salem series of degree $5$ in $\mathbb{F}_3((X^{-1}))$.}
\end{example}
Let
$$\Lambda(Y) = Y^{5}+X^4 Y^4
+ X^{5} Y^3 + X^{5} Y^2 + X^3 Y + 1
\in \mathbb{F}_{3}[X][Y].$$
We deduce from Theorem \ref{31} that  
$\Lambda$ is irreducible over $\mathbb{F}_3[X]$ 
and has $5$  roots
defined by 
$$\left\{
\begin{array}{ll}
\omega_1= & \hbox{$X^5 +2X+\displaystyle\frac{1}{X^2}+\ldots=X^5 +2X+\displaystyle\frac{1}{Z_1}$~~~~~~ such that $|Z_1|>1$} \\
    \omega_2= & \hbox{$X+1+ \displaystyle\frac{1}{Z_2}$~~~~~~ ~~~~~~~~~~~~~~~~~~~~~~~~~~~~~~~~~~~such that $|Z_2|>1$} \\
     \omega_3= & \hbox{$2+ \displaystyle\frac{1}{Z_3}$ ~~~~~~~~~~~~~~~~~~~~~~~~~~~~~~~~~~~~~~~~~~~~~ ~~such that $|Z_3|>1$} \\
      \omega_4= & \hbox{$\displaystyle\frac{1}{X^2}+\ldots$ }\\
     \omega_5= & \hbox{$\displaystyle\frac{2}{X^3} +\ldots$}\\
  \end{array}
\right.$$
These roots correspond to the facets of the upper 
Newton polygon associated with the 
2-Salem minimal polynomial $\Lambda$.  
Since $\Lambda$ is monic then $w_1$ is an 
algebraic integer. 
Therefore  $(\omega_1,\omega_2)$
is a 2-Salem  element.

\vspace{0.2cm}

\end{document}